\newtheorem{theorem}{Theorem}[section]
\numberwithin{equation}{theorem}
\newtheorem{lemma}[theorem]{Lemma}
\newtheorem{cor}[theorem]{Corollary}
\newtheorem{question}[theorem]{Question}
\newtheorem{exercise}[theorem]{Exercise}
\theoremstyle{definition}
\newtheorem{defn}[theorem]{Definition}
\newtheorem{remark}[theorem]{Remark}
\newtheorem{hypothesis}[theorem]{Hypothesis}
\newtheorem{notation}[theorem]{Notation}
\newcommand{\del}{\partial}
\newcommand{\CC}{\mathbb{C}}
\newcommand{\QQ}{\mathbb{Q}}
\newcommand{\RR}{\mathbb{R}}
\newcommand{\ZZ}{\mathbb{Z}}
\DeclareMathOperator{\epi}{epi}
\DeclareMathOperator{\Frac}{Frac}
\DeclareMathOperator{\inte}{int}
\DeclareMathOperator{\spect}{spect}
\begin{document}

\title{Detecting integral polyhedral functions}
\author{Kiran S. Kedlaya and Philip Tynan}
\date{November 19, 2008}

\maketitle

\begin{abstract}
We study the class of real-valued functions on convex subsets of
$\RR^n$ which are computed by the maximum of finitely many
affine functionals with integer slopes. We prove several results
to the effect that this property of a function can be detected by
sampling on small subsets of the domain. In so doing, we recover
in a unified way
some prior results of the first author (some joint with Liang
Xiao). We also prove that a function on $\RR^2$ is a tropical polynomial
if and only if its restriction to each translate of a generic tropical
line is a tropical polynomial.
\end{abstract}

\section*{Introduction}

One of the most fundamental classes of real-valued functions on $\RR^n$
is the class of \emph{convex polyhedral functions}, 
i.e., those functions computed as the maximum
of a finite number of affine functionals
\[
\lambda(x_1,\dots,x_n) = a_1 x_1 + \cdots + a_n x_n + b.
\]
For one, this class is fundamental in the theory
of linear programming; on the other hand, it also figures prominently
in algebraic geometry via the study of tropical polynomial functions.

The purpose of this paper is to study some subclasses of convex polyhedral
functions for which we impose some integrality conditions.
The two classes we focus on are the \emph{transintegral}
polyhedral functions, for which the coefficients $a_1,\dots,a_n$
in the affine functionals $\lambda$ must be integers,
and the \emph{integral} polyhedral functions, for which both
the coefficients $a_1, \dots, a_n$ and the constant
term $b$ must be integers. 

What we prove are a number of results of the following form: a function 
on a suitable convex subset of $\RR^n$ is
(trans)integral polyhedral if and only if the same is true of
its restrictions to some small subsets of the domain (usually certain
straight lines). In so doing, we recover in a unified way 
two earlier results along these lines, one 
(Theorem~\ref{T:integral2}) from the solo paper
\cite{kedlaya-part3} by the first author, the other 
(Theorem~\ref{T:kedlaya-xiao}) from the joint
paper \cite{kedlaya-xiao} by the first author and Liang Xiao. 
(Those results were introduced to study $p$-adic differential equations;
we include a brief description of that application.)
We also
obtain a theorem (Theorem~\ref{T:tropical})
that asserts that a tropical Laurent polynomial may
be identified from its restrictions to the translates of a generic
tropical line.

\section{Convex sets and functions}

\begin{notation}
Throughout this paper, let $e_1,\dots,e_n$ denote the standard basis
vectors of $\RR^n$.
\end{notation}

\begin{notation}
For all definitions in this section and the next, fix a subfield $F$ of $\RR$.
When one of these definitions is referenced with $F$ omitted, one should
take $F = \RR$; the only other case we will be interested is $F = \QQ$.
\end{notation}

\begin{defn} \label{D:convex set}
Let $S$ be a subset of $F^n$ for some nonnegative integer $n$.
We say $S$ is \emph{$F$-convex} (resp.\ \emph{$F$-affine}) if 
for any $x,y \in S$ and $t \in [0,1] \cap F$ (resp.\  $t \in F$), 
we have $tx + (1-t)y \in S$. Any intersection of $F$-convex
(resp.\ $F$-affine) sets is again $F$-convex (resp.\ $F$-affine).
\end{defn}

\begin{defn}
For any set $T \subseteq F^n$, the \emph{$F$-convex hull} 
(resp.\ \emph{$F$-affine hull}) of $T$
is the intersection of all $F$-convex (resp.\ $F$-affine)
sets of $F^n$ containing $T$.
It is equal to the set of all points of the form 
$t_1 x_1 + \cdots + t_m x_m$ for some positive integer $m$, some
$x_1,\dots,x_m \in S$, and some $t_1,\dots,t_m \in F \cap [0, +\infty)$
(resp.\ $t_1,\dots,t_m \in F$) with $t_1 + \cdots + t_m = 1$
\cite[Corollary~1.4.1, Theorem~2.3]{rock}.
\end{defn}

\begin{defn}
For $S \subseteq \RR^n$ convex, the \emph{(relative) interior} of $S$,
denoted $\inte(S)$, is defined to be the topological interior of
$S$ relative to its affine hull. This is nonempty if $S$ is nonempty
\cite[Theorem~6.2]{rock}. By the \emph{dimension} of $S$, denoted $\dim(S)$,
we will mean the dimension of its affine hull.
\end{defn}

\begin{defn} \label{D:convex fn}
Let $S \subseteq F^n$ be an $F$-convex set.
A function $f: S \to \RR$ is \emph{$F$-convex} if for any $x,y \in S$ and 
$t \in [0,1] \cap F$, we have the Jensen inequality 
\begin{equation} \label{eq:Jensen ineq}
tf(x) + (1-t)f(y) \geq f(tx + (1-t)y).
\end{equation}
This implies that for any 
$x_1,\dots,x_m \in S$ and $t_1,\dots,t_m \in [0,1] \cap F$
with $t_1 + \cdots + t_m = 1$,
we have
\begin{equation} \label{eq:Jensen ineq2}
\sum_{i=1}^m t_i f(x_i) \geq f \left( \sum_{i=1}^m t_i x_i \right).
\end{equation}
If $f$ is convex, then $f$ is continuous on $\inte(S)$
\cite[Theorem~10.1]{rock}; we will prove a stronger result later
(Theorem~\ref{T:extend convex}).
\end{defn}

\begin{remark}
Note that $f$ is convex if and only if the \emph{epigraph} of $f$, defined by
\[
\epi(f) = \{(x,y) \in S \times \RR: y \geq f(x)\},
\]
is convex. Using this convention, we may extend the definition
of convexity to functions with range $\RR \cup \{\pm \infty\}$
(but only when explicitly specified).
In particular, one can canonically extend any convex function
on $S$ to a convex function on $\RR^n$ taking the value $+\infty$
everywhere on $\RR^n \setminus S$; this is the convention used in
\cite{rock}.
\end{remark}

\section{Directional derivatives}

\begin{defn}
Let $S \subseteq F^n$ be an $F$-convex subset.
Pick $x \in S$ and 
$z \in F^n$ such that $x + tz \in S$ for some $t \in (0, +\infty) \cap F$.
For $f: S \to \RR$ a function whose restriction to
$\{x + tz: t \in [0,\epsilon] \cap F\}$ is $F$-convex for some $\epsilon > 0$,
define $f'(x, z)$ to be the directional derivative of $f$ at $x$
in the direction of $z$, i.e.,
\[
f'(x, z) = \lim_{t \to 0^+} \frac{f(x + tz) - f(x)}{t}.
\]
Note that this is a limit 
taken over a decreasing sequence; for it to exist in all 
cases, we must allow it to take the value $-\infty$.
\end{defn}

\begin{lemma}\label{L:dir deriv}
Let $S \subseteq F^n$ be an $F$-convex subset.
Let $U \subseteq S$ be an $F$-convex subset.
Suppose $z \in F^n$ is such that for each $x \in U$, there exists
$t \in (0, +\infty) \cap F$ such that $x+tz \in S$. 
Let $f: S \to \RR$ be a function satisfying the following conditions.
\begin{enumerate}
\item[(a)]
The restriction of $f$ to $U$ is affine.
\item[(b)]
For each $x \in U$, there exists $\epsilon > 0$ such that
the restriction of $f$ to $\{x+tz: t \in [0, \epsilon] \cap F\}$
is $F$-convex.
\item[(c)]
For each line segment $L$ with endpoints in $U$,
there exists $\epsilon > 0$ such that
for each $t \in [0, \epsilon] \cap F$, the restriction of
$f$ to $(L \cap F^n) + tz$ is $F$-convex.
\end{enumerate}
Then the function $x \mapsto f'(x,z)$ is $F$-convex on $U$.
\end{lemma}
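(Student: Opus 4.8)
The plan is to verify the Jensen inequality \eqref{eq:Jensen ineq} for the function $g(x) := f'(x,z)$ directly. Fix $x, y \in U$ and $s \in [0,1] \cap F$, and set $w = sx + (1-s)y$. Since $U$ is $F$-convex, $w \in U$, so by hypothesis (b) the directional derivative is defined at all three points $x, y, w$, and $g$ is a well-defined function $U \to \RR \cup \{-\infty\}$. For $t \in (0, +\infty) \cap F$ write
\[
q_a(t) = \frac{f(a + tz) - f(a)}{t} \qquad (a \in \{x, y, w\}),
\]
so that $g(a) = \lim_{t \to 0^+} q_a(t)$. The goal is to show $s\, g(x) + (1-s)\, g(y) \geq g(w)$.

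First I would combine the subtracted terms using hypothesis (a). Because $f$ is affine on $U$ and $w = sx + (1-s)y$, we have $s f(x) + (1-s) f(y) = f(w)$, whence
\[
s\, q_x(t) + (1-s)\, q_y(t) - q_w(t) = \frac{s f(x+tz) + (1-s) f(y+tz) - f(w + tz)}{t}.
\]
It therefore suffices to show the numerator on the right is nonnegative for all small $t$, and this is exactly where hypothesis (c) enters. Take $L$ to be the segment joining $x$ and $y$. For $t$ in the range guaranteed by (c), the restriction of $f$ to $(L \cap F^n) + tz$ is $F$-convex; since $x + tz$ and $y + tz$ lie in this set and
\[
w + tz = s(x + tz) + (1-s)(y + tz)
\]
exhibits $w + tz$ as the corresponding $F$-convex combination of them, the Jensen inequality \eqref{eq:Jensen ineq} gives $s f(x+tz) + (1-s) f(y+tz) \geq f(w+tz)$. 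Hence $s\, q_x(t) + (1-s)\, q_y(t) \geq q_w(t)$ for all sufficiently small $t \in (0,+\infty)\cap F$.

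Finally I would let $t \to 0^+$. By hypothesis (b) and convexity of the restriction to the ray through $a$ in direction $z$, each quotient $q_a(t)$ decreases to $g(a) \in \RR \cup \{-\infty\}$ as $t \to 0^+$; passing to the limit in the inequality just obtained yields $s\, g(x) + (1-s)\, g(y) \geq g(w)$, which is \eqref{eq:Jensen ineq} for $g$. The only place I expect to have to argue with care is this limiting step when some $g(a) = -\infty$: here I would note that each $g(a)$ is the infimum of finitely-bounded quotients and so never equals $+\infty$, so that the extended-real arithmetic on the left-hand side is unambiguous, and that if the left side tends to $-\infty$ then $q_w(t)$ is squeezed to $-\infty$ as well, forcing $g(w) = -\infty$. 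Thus the inequality survives the limit in every case, and $g$ is $F$-convex on $U$.
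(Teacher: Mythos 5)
Your proposal is correct and follows essentially the same route as the paper's proof: both use hypothesis (a) to combine the subtracted terms, hypothesis (c) on the translated segment $(L \cap F^n) + tz$ to get the Jensen inequality for the shifted points, and the monotone decrease of the difference quotients (from (b)) to pass to the limit, with the same treatment of the $-\infty$ case. The only cosmetic difference is that the paper fixes a single small $u$ with a $\delta$-tolerance rather than taking the limit of the whole inequality at once.
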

\begin{proof}
Take $x_1, x_2 \in U$.
We assume first that $f'(x_1, z), f'(x_2, z) > -\infty$.
Pick $t \in [0,1] \cap F$ and put $x_3 = tx_1 + (1-t)x_2$.
Choose $\epsilon > 0$ satisfying (b) for each of $x = x_1, x_2, x_3$, and satisfying
(c) for $L$ the line segment from $x_1$ to $x_2$.
Pick $u \in (0, \epsilon] \cap F$ such that
\[
\frac{f(x_i+uz) - f(x_i)}{u} \leq f'(x_i,z) + \delta \qquad (i =1,2).
\]
Then 
\begin{align*}
t f'(x_1,z) + (1-t) f'(x_2,z) & \geq
t \frac{f(x_1 + uz) - f(x_1)}{u} + (1-t) 
\frac{f(x_2 + uz) - f(x_2)}{u} - \delta \\
&= \frac{t f(x_1 + uz) + (1-t)f(x_2 + uz) - f(x_3)}{u} - \delta \\
&\geq \frac{f(x_3 + uz) - f(x_3)}{u} - \delta \\
&\geq f'(x_3,z) - \delta.
\end{align*}
Since $\delta$ was arbitrary, this proves the claim when both 
$f'(x_1, z)$ and $f'(x_2, z)$ are not $-\infty$.  
If one of them is $-\infty$, the same argument would imply that 
$f'(x_3, z) = -\infty$; this completes the proof.
\end{proof}

\section{Affine functionals}

\begin{notation}
For all definitions in this section and the next, fix a subgroup $G$ of $\RR$.
\end{notation}

\begin{defn}
An \emph{affine functional} is a map $\lambda: \RR^n \to \RR$ of the form
$\lambda(x_1,\dots,x_n) = a_1 x_1 + \cdots + a_n x_n + b$ for some
$a_1, \dots, a_n, b \in \RR$.
The \emph{slope} of $\lambda$ is the linear functional
$\mu$ defined by $\mu(x_1,\dots,x_n) = a_1 x_1 + \cdots + a_n x_n$.
We say $\lambda$ is \emph{$G$-integral} if $a_1,\dots,a_n \in \ZZ$
and $b \in G$.
We use \emph{integral} and \emph{transintegral} as synonyms for
$\ZZ$-integral and $\RR$-integral, respectively.
\end{defn}

We can characterize convexity using affine functionals as follows.
\begin{lemma} \label{L:bourbaki}
Let $S \subseteq \RR^n$ be an open convex subset.
Consider the following conditions on a function $f: S \to \RR$.
\begin{enumerate}
\item[(a)]
$f$ is convex.
\item[(b)]
For each $x \in S$, there exists 
an affine functional $\lambda: \RR^n \to \RR$ such that
\begin{align*}
f(x) &= \lambda(x) \\
f(y) &\geq \lambda(y) \qquad (y \in S).
\end{align*}
\item[(c)]
For each $x \in S$, there exist
an affine functional $\lambda$ and
a quantity $\epsilon > 0$
such that 
\begin{align*}
f(x) &= \lambda(x) \\
f(y) &\geq \lambda(y) \qquad (y \in S, |x-y| < \epsilon).
\end{align*}
\end{enumerate}
Then (a) and (b) are equivalent. Moreover,
if $f$ is assumed to be upper semicontinuous,
then (a) and (b) are also equivalent to (c).
\end{lemma}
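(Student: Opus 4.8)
The plan is to establish the cycle by dispatching the two easy directions first and then isolating the genuine content, which is the passage from the local condition (c) to global convexity. For (b) $\Rightarrow$ (a): given $x,y\in S$ and $t\in[0,1]$, set $z=tx+(1-t)y$, take the functional $\lambda$ supporting $f$ at $z$, and use affineness of $\lambda$ together with $f\geq\lambda$ on $S$ to get $tf(x)+(1-t)f(y)\geq t\lambda(x)+(1-t)\lambda(y)=\lambda(z)=f(z)$, which is exactly the Jensen inequality \eqref{eq:Jensen ineq}. (Conceptually, (b) exhibits $f$ as a pointwise supremum of affine functionals, hence convex.)

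For (a) $\Rightarrow$ (b) I would argue through the epigraph. Since $S$ is open and $f$ is convex, $\epi(f)$ is a convex subset of $\RR^{n+1}$ with nonempty interior, and $(x,f(x))$ lies on its boundary, so it admits a supporting hyperplane. This hyperplane is non-vertical precisely because $x\in\inte(S)$: a functional depending only on the $\RR^n$-coordinates cannot bound $S$ on one side of an interior point. Solving the supporting relation for the last coordinate yields the required affine $\lambda$ with $f(x)=\lambda(x)$ and $f\geq\lambda$ on $S$; alternatively one may simply invoke the existence of a subgradient at an interior point of the domain, \cite[Theorem~23.4]{rock}. As (b) $\Rightarrow$ (c) is immediate (global support restricts to local support), the only remaining task is (c) $\Rightarrow$ (a) under upper semicontinuity.

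To handle that I would reduce to one variable, since convexity is a condition along segments: it suffices to fix $p,q\in S$ and prove that $h(s)=f(sp+(1-s)q)$ is convex on $[0,1]$. Restricting each local functional $\lambda$ supplied by (c) to this segment shows that $h$ satisfies (c) at every interior point, and $h$ is upper semicontinuous on the compact interval $[0,1]$. Now suppose $h$ were not convex; let $L$ be the affine function agreeing with $h$ at the endpoints and put $g=h-L$, so $g(0)=g(1)=0$ while $\sup g=M>0$. Upper semicontinuity forces $g$ to attain $M$, and any maximizer is interior. At a maximizer $s_0$, the restricted support functional gives an affine minorant $a$ of $g$ near $s_0$ with $a(s_0)=g(s_0)=M$; since $a\leq g\leq M$ near $s_0$ and $a(s_0)=M$, the affine function $a$ attains a local maximum and is therefore constant $\equiv M$, whence $g\equiv M$ on a neighborhood of $s_0$. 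Thus the maximizer set is open.

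Finally, the maximizer set equals $\{g\geq M\}$, which is closed by upper semicontinuity; being simultaneously open, closed, nonempty, and disjoint from the endpoints, it contradicts the connectedness of $[0,1]$. Hence $h$ is convex, completing (c) $\Rightarrow$ (a). I expect this last open/closed/connectedness argument to be the main obstacle: the difficulty is promoting the purely local information of (c) to global convexity, and it is exactly here that upper semicontinuity is indispensable, since it is what rules out downward spikes (functions like $f=0$ away from a point where it dips below) that satisfy (c) without being convex.
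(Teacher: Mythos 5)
Your proposal is correct in substance but takes a genuinely more self-contained route on the one direction where the paper does not: for (c) $\Rightarrow$ (a) the paper simply reduces to $n=1$ (citing Young) and then cites Bourbaki for the one-variable statement, whereas you reduce to a segment and prove the one-variable case from scratch via the open--closed--connectedness argument on the set of maximizers of $g=h-L$. That argument is sound: upper semicontinuity gives attainment of the maximum and closedness of $\{g\geq M\}$, the local support functional forces an affine function to have an interior local maximum and hence makes the maximizer set open, and connectedness of the interval finishes it. The one imprecision is the step ``suppose $h$ were not convex\dots so $\sup g=M>0$'': non-convexity of $h$ on $[0,1]$ does not imply that $h$ exceeds the chord joining its values at $0$ and $1$ (the violation may occur only over a proper subinterval). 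This is harmless and fixed in one line --- either run the argument on the offending subinterval, or observe that convexity of $f$ is exactly the two-point Jensen inequality \eqref{eq:Jensen ineq} for every pair $p,q\in S$, whose failure for some pair is literally the statement $\sup_{[0,1]}(h-L)>0$ for that pair, which is all your argument uses. Your treatment of (a) $\Leftrightarrow$ (b) matches the paper's (supporting hyperplane of the epigraph, non-vertical because $S$ is open, in one direction; $f$ as a pointwise supremum of affine minorants, or equivalently the direct Jensen computation, in the other), and (b) $\Rightarrow$ (c) is trivial in both. What your version buys is independence from the Young and Bourbaki references at the cost of a page of elementary argument; it also makes visible exactly where upper semicontinuity is used, namely to attain the maximum and to close up the maximizer set.
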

\begin{proof}
If (a) holds, then (b) holds because $(x, f(x)) \notin \inte(\epi(f))$,
so by \cite[Corollary~11.6.2]{rock} there is a linear function on
$\RR^{n+1}$ which is zero at $(x, f(x))$ and nonnegative on $\epi(f)$.
If (b) holds, then (a) holds because $\epi(f)$ is the intersection of
the epigraphs of some affine functionals, and so is convex.

Clearly (b) implies (c). Conversely, suppose
$f$ is upper semicontinuous function and (c) holds.
As noted in \cite[Theorem~3]{young}, the convexity of $f$ in this case
follows from the analogous statement with $n=1$, which in turn follows
at once from 
\cite[\S 1.4, Proposition~9, Corollaire~1]{bourbaki}.
\end{proof}
\begin{cor} \label{C:minimal ext}
Let $S \subseteq \RR^n$ be an open convex subset,
let $f: S \to \RR$ be a convex function,
and let $T$ be the set of affine functionals $\lambda: \RR^n \to \RR$
such that $f(x) \geq \lambda(x)$ for all $x \in S$. Then for all $x \in S$,
\begin{equation} \label{eq:extend}
f(x) = \sup\{\lambda(x): \lambda \in T\}.
\end{equation}
\end{cor}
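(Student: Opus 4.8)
The plan is to deduce this corollary directly from Lemma~\ref{L:bourbaki}, using the equivalence of conditions (a) and (b). Fix $x \in S$. One inequality is immediate from the definition of $T$: every $\lambda \in T$ satisfies $\lambda(x) \leq f(x)$ by hypothesis, so taking the supremum gives $\sup\{\lambda(x) : \lambda \in T\} \leq f(x)$.

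For the reverse inequality, I would invoke Lemma~\ref{L:bourbaki}. Since $f$ is convex and $S$ is open and convex, condition (a) holds, hence so does condition (b). Applying (b) at the chosen point $x$, there exists an affine functional $\lambda_0 : \RR^n \to \RR$ with $f(x) = \lambda_0(x)$ and $f(y) \geq \lambda_0(y)$ for all $y \in S$. The second property says precisely that $\lambda_0 \in T$, so $\lambda_0$ is an admissible competitor in the supremum. Since $\lambda_0(x) = f(x)$, we obtain $\sup\{\lambda(x) : \lambda \in T\} \geq \lambda_0(x) = f(x)$.

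Combining the two inequalities yields \eqref{eq:extend}, and the supremum is in fact attained at $\lambda_0$. I do not anticipate any real obstacle here: the content of the statement is entirely supplied by the supporting hyperplane result packaged in Lemma~\ref{L:bourbaki}(b), and the corollary is essentially a restatement of that separation property as a pointwise reconstruction formula. The only thing to be careful about is checking that the functional produced by (b) genuinely lies in $T$ (i.e.\ that the defining inequality of $T$ holds on all of $S$, not merely near $x$), but this is exactly what condition (b) guarantees, as opposed to the local condition (c).
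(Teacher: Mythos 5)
Your proposal is correct and matches the paper's own argument exactly: one inequality is immediate from the definition of $T$, and the other follows by applying Lemma~\ref{L:bourbaki}(b) at $x$ to produce a member of $T$ attaining the value $f(x)$. Nothing further is needed.
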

\begin{proof}
The inequality $f(x) \geq \sup\{\lambda(x): \lambda \in T\}$ is evident
from the definition of $T$. For the reverse inequality, note that
for each $x \in S$, 
by Lemma~\ref{L:bourbaki}, there exists  $\lambda \in T$ with
$f(x) = \lambda(x)$.
\end{proof}
\begin{cor}
With notation as in Corollary~\ref{C:minimal ext},
the formula \eqref{eq:extend} defines a lower semicontinuous
extension of $f$ to a convex function $\tilde{f}: \RR^n \to \RR \cup \{+\infty\}$.
\end{cor}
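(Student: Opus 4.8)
The plan is to verify the three assertions built into the statement---that $\tilde{f}$ extends $f$, that $\tilde{f}$ is convex, and that $\tilde{f}$ is lower semicontinuous---by reducing everything to the epigraph picture set up in the Remark following Definition~\ref{D:convex fn}.

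First, the extension claim is immediate: for $x \in S$ the value $\tilde{f}(x) = \sup\{\lambda(x):\lambda\in T\}$ equals $f(x)$ by Corollary~\ref{C:minimal ext}, so $\tilde{f}|_S = f$. I would then pin down the range of $\tilde{f}$ on all of $\RR^n$. The key observation is that $T$ is nonempty: choosing any $x_0\in S$, Lemma~\ref{L:bourbaki} produces a $\lambda\in T$ with $f(x_0)=\lambda(x_0)$. Fixing such a $\lambda$, we get $\tilde{f}(x)\ge\lambda(x)>-\infty$ for every $x\in\RR^n$, so $\tilde{f}$ indeed takes values in $\RR\cup\{+\infty\}$.

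For convexity and lower semicontinuity I would compute $\epi(\tilde{f})$ directly. Since $y\ge\sup\{\lambda(x):\lambda\in T\}$ holds precisely when $y\ge\lambda(x)$ for every $\lambda\in T$, we have
\[
\epi(\tilde{f})=\bigcap_{\lambda\in T}\{(x,y)\in\RR^n\times\RR: y\ge\lambda(x)\}.
\]
Each set in this intersection is a closed half-space of $\RR^{n+1}$ (the epigraph of the continuous affine functional $\lambda$), hence closed and convex. An arbitrary intersection of closed convex sets is again closed and convex, so $\epi(\tilde{f})$ is closed and convex. Convexity of the epigraph is exactly the extended-real-valued convexity of $\tilde{f}$ in the sense of the Remark, and closedness of the epigraph is equivalent to lower semicontinuity of $\tilde{f}$; this yields both remaining assertions at once.

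I do not anticipate a serious obstacle here: the substance is the standard fact that a pointwise supremum of affine functions is convex and lower semicontinuous. The only points needing care are bookkeeping ones---confirming via the nonemptiness of $T$ that $\tilde{f}$ never takes the value $-\infty$, and handling the possible $+\infty$ values by arguing through the epigraph rather than through pointwise Jensen inequalities, so that the extended-real-valued notion of convexity from the Remark is the one actually being checked.
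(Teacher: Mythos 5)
Your proof is correct; the paper in fact states this corollary without any proof at all, treating it as an immediate consequence of Corollary~\ref{C:minimal ext} and the epigraph convention of the preceding Remark. Your argument --- nonemptiness of $T$ to rule out the value $-\infty$, and the identification of $\epi(\tilde{f})$ as an intersection of closed half-spaces to get convexity and lower semicontinuity simultaneously --- is exactly the standard justification the paper is implicitly invoking.
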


\begin{defn} \label{D:domain of aff}
For $S \subseteq \RR^n$ a convex subset and
$f: S \to \RR$ a convex function, a \emph{domain of affinity}
for $f$
is a subset $U \subseteq S$ with nonempty interior relative to 
the affine hull of $S$,
on which $f$ agrees with
an affine functional $\lambda$. 
We call $\lambda$ an \emph{ambient functional} for $U$;
it is uniquely determined if $S$ has affine hull $\RR^n$.
\end{defn}

\begin{lemma} \label{L:domain of aff}
Let $S \subseteq \RR^n$ be a convex subset, let
$f: S \to \RR$ be a convex function, and let
$\lambda$ be an ambient
functional for some domain of affinity $U$ for $f$.
Then $f(x) \geq \lambda(x)$ for all $x \in S$.
\end{lemma}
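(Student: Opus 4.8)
The plan is to reduce the claim to a single application of the Jensen inequality \eqref{eq:Jensen ineq}, exploiting the fact that $U$ has nonempty interior relative to the affine hull of $S$. The underlying picture is that $\lambda$ behaves like a supporting functional: if $f$ coincides with the affine $\lambda$ on a set that is ``thick'' inside $S$, then convexity forces $f$ to stay above $\lambda$ everywhere.

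First I would fix a point $x_0$ lying in the relative interior of $U$; this exists by hypothesis together with \cite[Theorem~6.2]{rock}. The key geometric observation is that because $x_0$ is a relative interior point, for any target point $x \in S$ I can travel a short distance from $x_0$ in the direction $x_0 - x$ and remain inside $U$. Concretely, since $x, x_0 \in S$ the vector $x_0 - x$ lies in the linear subspace underlying the affine hull of $S$, so for all sufficiently small $s > 0$ the point $y := x_0 + s(x_0 - x)$ lies in $U$. A direct computation then shows that $x_0 = t x + (1-t) y$ with $t = s/(1+s) \in (0,1)$; that is, $x_0$ is an interior point of the segment joining $x$ to the auxiliary point $y \in U$.

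With this configuration in hand the rest is immediate. Applying the Jensen inequality gives $f(x_0) \le t f(x) + (1-t) f(y)$. Since $x_0, y \in U$, I may replace $f(x_0)$ by $\lambda(x_0)$ and $f(y)$ by $\lambda(y)$; and since $\lambda$ is affine, $\lambda(x_0) = t \lambda(x) + (1-t)\lambda(y)$. Substituting and cancelling the common term $(1-t)\lambda(y)$ leaves $t\lambda(x) \le t f(x)$, and dividing by $t > 0$ yields $f(x) \ge \lambda(x)$, as desired.

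The only real content is the geometric step producing $y$: it is precisely here that the requirement in Definition~\ref{D:domain of aff} that $U$ have nonempty relative interior is used, since without an interior point one could not reflect across $x_0$ while remaining in $U$. Everything else is a formal manipulation of the Jensen inequality together with the affineness of $\lambda$, so I expect no further difficulty.
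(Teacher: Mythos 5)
Your argument is correct and is essentially the paper's own proof: both pick a point in the relative interior of $U$, use it to build a segment through $x$ whose other two distinguished points lie in $U$, and then apply the Jensen inequality \eqref{eq:Jensen ineq} once, using that $f=\lambda$ on $U$ and that $\lambda$ is affine. The only cosmetic difference is that the paper slides from the interior point toward $x$ to produce the combination point inside $U$, whereas you reflect past the interior point away from $x$ to produce the second endpoint inside $U$; the inequality manipulation is identical.
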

\begin{proof}
Pick $y$ in the topological interior of $U$ relative to the affine hull
of $S$; then for $t \in [0,1]$ sufficiently close to 0,
$z = tx + (1-t)y$ will also belong to $U$.
The claim now follows from \eqref{eq:Jensen ineq}.
\end{proof}
\begin{cor} \label{C:domain convex}
With notation as in Lemma~\ref{L:domain of aff}, the maximal
domain of affinity $U$ with ambient functional $\lambda$ is a convex set.
\end{cor}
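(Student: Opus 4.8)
The plan is to realize the maximal domain of affinity as a sublevel set of a convex function. First I would invoke Lemma~\ref{L:domain of aff}, which guarantees that the ambient functional satisfies $f(x) \geq \lambda(x)$ for every $x \in S$. Hence the condition $f(x) = \lambda(x)$ is equivalent to the single inequality $f(x) \leq \lambda(x)$, and the maximal domain of affinity with ambient functional $\lambda$ is
\[
U = \{x \in S : f(x) = \lambda(x)\} = \{x \in S : f(x) \leq \lambda(x)\}.
\]
This set contains the domain of affinity supplied by the hypotheses of Lemma~\ref{L:domain of aff}, so it inherits a nonempty interior relative to the affine hull of $S$ and is therefore itself a domain of affinity, necessarily the largest one with ambient functional $\lambda$.

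Next I would replace $f$ by the auxiliary function $g = f - \lambda$ on $S$. Because $\lambda$ is affine, it satisfies \eqref{eq:Jensen ineq} with equality, so $-\lambda$ is convex; adding this to the convex function $f$ shows that $g$ is convex. Under this substitution $U$ becomes the zero sublevel set $\{x \in S : g(x) \leq 0\}$.

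It then remains to record the standard fact that a sublevel set of a convex function is convex. Concretely, for $x, y \in U$ and $t \in [0,1]$, the Jensen inequality \eqref{eq:Jensen ineq} applied to $g$ gives
\[
g(tx + (1-t)y) \leq t g(x) + (1-t) g(y) \leq 0,
\]
so $tx + (1-t)y \in U$; thus $U$ is convex.

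I do not expect a substantive obstacle. The only point needing care is the bookkeeping in the first step: using Lemma~\ref{L:domain of aff} to convert the equality $f = \lambda$ into a one-sided inequality, and checking that the resulting set is genuinely the maximal domain of affinity. Once that identification is made, the conclusion is exactly the elementary closure of convex-function sublevel sets under convex combinations.
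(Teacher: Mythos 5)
Your proof is correct and is essentially the paper's argument: both use Lemma~\ref{L:domain of aff} to reduce membership in $U$ to the one-sided condition $f \leq \lambda$ and then apply the Jensen inequality \eqref{eq:Jensen ineq}; the paper just writes this as a single chain of inequalities $f(z) \geq \lambda(z) = t\lambda(x)+(1-t)\lambda(y) = tf(x)+(1-t)f(y) \geq f(z)$ rather than packaging it as a sublevel set of the convex function $f-\lambda$.
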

\begin{proof}
For $x,y \in U$ and $t \in [0,1]$, for $z = tx + (1-t)y$, we have
\[
f(z) \geq \lambda(z) = t\lambda(x) + (1-t)\lambda(y)
= tf(x) + (1-t)f(y) \geq f(z).
\]
Hence $z \in U$.
\end{proof}

We  build domains of affinity using the following argument.
\begin{lemma} \label{L:build domain}
Let $T \subseteq \RR^n$ be a subset with convex hull $U$,
and pick $z \in \inte(U)$.
Let
$f: U \to \RR$ be a convex function, let $\lambda: \RR^n \to \RR$
be an affine functional, 
and suppose that $f(x) = \lambda(x)$ for all $x \in T \cup \{z\}$. Then
$f(x) = \lambda(x)$ for all $x \in U$.
\end{lemma}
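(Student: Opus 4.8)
The plan is to subtract $\lambda$, reducing to the case $\lambda = 0$, and then to prove the two inequalities $f \le 0$ and $f \ge 0$ on $U$ separately. Since $\lambda$ is affine, $f - \lambda$ is again convex and satisfies the same hypotheses, with conclusion equivalent to the original; so I may assume $\lambda = 0$, meaning $f = 0$ on $T \cup \{z\}$, and the goal becomes to show $f \equiv 0$ on $U$.

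For the upper bound, recall from Section~1 that every $x \in U$ is a finite convex combination $x = \sum_{i=1}^m t_i x_i$ with $x_i \in T$, $t_i \ge 0$, and $\sum_i t_i = 1$. The Jensen inequality \eqref{eq:Jensen ineq2} together with $f(x_i) = 0$ then gives $f(x) \le \sum_i t_i f(x_i) = 0$, so $f \le 0$ throughout $U$.

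The content of the lemma lies in the reverse inequality, which is where the hypothesis $z \in \inte(U)$ enters. Fix $x \in U$; the case $x = z$ is trivial, so assume $x \ne z$. Because $z$ lies in the relative interior of $U$ and $z - x$ is a direction tangent to the affine hull of $U$ (being a difference of two of its points), the point $y := z + s(z - x)$ belongs to $U$ for all sufficiently small $s > 0$. Fixing such an $s$ and setting $t := s/(1+s) \in (0,1)$, one checks directly that $z = t x + (1-t) y$. Applying Jensen at $z$ and using $f(z) = 0$ gives $0 = f(z) \le t f(x) + (1-t) f(y)$; since the upper bound already forces $f(y) \le 0$ and $1 - t > 0$, this rearranges to $t f(x) \ge 0$, whence $f(x) \ge 0$.

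Together the two inequalities yield $f(x) = 0$ for every $x \in U$, which is the claim. I expect the only point requiring care to be the geometric assertion that the segment from $x$ through $z$ extends slightly past $z$ while remaining in $U$: this is the standard line-segment property of the relative interior, and it is precisely here that interiority of $z$ (rather than mere membership in $U$) is indispensable, since for a boundary point $z$ no such extension need exist and the conclusion could fail.
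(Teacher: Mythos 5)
Your proof is correct and follows essentially the same route as the paper: Jensen over a convex combination of points of $T$ gives $f \le \lambda$, and extending the segment from $x$ through the interior point $z$ gives the reverse inequality. The only cosmetic difference is that you invoke the already-proved bound $f(y)\le 0$ at the auxiliary point $y$, whereas the paper expands $y$ as a convex combination of points of $T$ and applies Jensen to the whole combination at once.
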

\begin{proof}
Pick any $x \in U$.
On one hand, by \eqref{eq:Jensen ineq2}, $f(x) \leq \lambda(x)$.
On the other hand, since $z \in \inte(U)$,
there exist $y \in U$ and $t \in (0,1]$ such that
$tx + (1-t) y = z$ (namely, pick $y$ on the ray from $z$ away from $x$). 
Since $y$ is in the convex hull of $T$,
we can find $y_1,\dots,y_m \in T$ and $t_1,\dots,t_m \in [0,1]$
with $t_1 + \cdots + t_m = 1$, such that
$t_1 y_1 + \cdots + t_m y_m = y$.
Then
\[
tx + (1-t)t_1 y_1 + \cdots + (1-t)t_m y_m = z,
\]
so \eqref{eq:Jensen ineq2} implies $f(x) \geq \lambda(x)$.
\end{proof}

\section{Polyhedral sets and functions}

\begin{defn}
A \emph{($G$-rational) closed halfspace} is a subset of $\RR^n$ of the form
\[
S = \{x \in \RR^n: \lambda(x) \geq 0\},
\]
for  $\lambda$ a ($G$-integral) affine functional with nonzero slope.
A subset $S \subseteq \RR^n$ is \emph{($G$-rational) polyhedral}
if it is the intersection of some finite number of 
($G$-rational) closed halfspaces. (This number may be zero, in which
case $S = \RR^n$.) A polyhedral subset of $\RR^n$ is closed and convex,
but not necessarily bounded.
We use \emph{rational} and \emph{transrational} as synonyms for
$\ZZ$-rational (or $\QQ$-rational, as these are equivalent) 
and $\RR$-rational, respectively.
\end{defn}

\begin{lemma} \label{L:minimal ext1}
Let $S \subseteq \RR^n$ be a polyhedral subset.
Then any convex function $f: S \to \RR$ is upper semicontinuous. Consequently,
$f$ is continuous if and only if it is lower
semicontinuous.
\end{lemma}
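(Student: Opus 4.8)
The plan is to prove the upper semicontinuity assertion; the last sentence is then immediate, since a function is continuous exactly when it is both upper and lower semicontinuous, so once upper semicontinuity holds automatically, continuity becomes equivalent to lower semicontinuity. Because $f$ is already continuous on $\inte(S)$ by \cite[Theorem~10.1]{rock}, the only genuine issue is upper semicontinuity at a point $x_0$ on the relative boundary of $S$; in fact the argument I have in mind applies verbatim at every $x_0 \in S$.

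Next I would exploit the local polyhedral structure at $x_0$. Writing $S = \{x : \lambda_j(x) \ge 0,\ j = 1,\dots,N\}$ and letting $J$ be the set of indices active at $x_0$ (those with $\lambda_j(x_0) = 0$), continuity of the inactive $\lambda_j$ produces a neighborhood $N_0$ of $x_0$ on which $S$ agrees with $x_0 + C$, where $C = \{z : \mu_j(z) \ge 0,\ j \in J\}$ is the tangent cone and $\mu_j$ denotes the slope of $\lambda_j$. This $C$ is a polyhedral cone, hence finitely generated: it is the set of nonnegative combinations of finitely many directions $z_1,\dots,z_m$. After rescaling each generator (which does not change $C$), I may assume $w_i := x_0 + z_i \in S$ for every $i$.

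The point of finite generation is that any $x \in S \cap N_0$ can be written as $x = x_0 + \sum_i t_i z_i = (1 - s) x_0 + \sum_i t_i w_i$ with $t_i \ge 0$ and $s = \sum_i t_i$, which for $s \le 1$ is a genuine convex combination of the points $x_0, w_1, \dots, w_m \in S$; the Jensen inequality \eqref{eq:Jensen ineq2} then yields $f(x) \le (1-s) f(x_0) + \sum_i t_i f(w_i)$. As $x \to x_0$ the right-hand side tends to $f(x_0)$ provided the coefficients $t_i$ can be chosen so that $s \to 0$, and this is exactly the crux of the matter.

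I expect this last point to be the main obstacle, since for a general convex cone the conical coefficients of a point need not be controlled by its norm. Here polyhedrality saves the day: one may triangulate $C$ into finitely many simplicial subcones, on each of which the relevant generators are linearly independent and the coefficients $t_i$ are therefore a fixed linear—hence continuous and vanishing at $0$—function of $x - x_0$; equivalently, the minimal value of $\sum_i t_i$ over all such representations is bounded by a constant times $|x - x_0|$. Either way $s \to 0$ as $x \to x_0$, giving $\limsup_{x \to x_0} f(x) \le f(x_0)$, as desired. Alternatively, one could simply invoke \cite[Theorem~10.2]{rock}, which asserts upper semicontinuity relative to any locally simplicial set, together with the observation that polyhedral sets are locally simplicial.
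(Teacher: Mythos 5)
Your proof is correct. It is worth noting, though, that the paper's entire proof of this lemma is the single citation you relegate to your final sentence: it simply invokes \cite[Theorem~10.2]{rock}, which asserts that a convex function is upper semicontinuous relative to any locally simplicial set, of which a polyhedral set is an instance. The bulk of your write-up is therefore a genuinely different, self-contained argument, and a sound one: localizing $S$ at $x_0$ to the translate of its tangent cone $C$, using the Minkowski--Weyl finite generation of $C$ to write nearby points of $S$ as convex combinations $(1-s)x_0 + \sum_i t_i w_i$ with $w_i \in S$ fixed, and then using Jensen's inequality \eqref{eq:Jensen ineq2} together with the bound $s = O(|x - x_0|)$ to force $\limsup_{x\to x_0} f(x) \le f(x_0)$. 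You correctly identify the one point where polyhedrality is indispensable --- controlling $\sum_i t_i$ by $|x-x_0|$ --- and your resolution via simplicial subcones works; one could phrase it even more economically via Carath\'eodory's theorem for cones, which lets you take the $z_i$ appearing in the representation of $x - x_0$ to be linearly independent, so the coefficients are bounded by a constant (depending only on the finitely many such subsets) times $|x-x_0|$. What your approach buys is independence from Rockafellar's machinery of locally simplicial sets; what the paper's citation buys is brevity and slightly greater generality, since Theorem~10.2 of \cite{rock} covers locally simplicial sets that need not be polyhedral.
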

\begin{proof}
See \cite[Theorem~10.2]{rock}.
\end{proof}

\begin{defn}
Let $S \subseteq \RR^n$ be a polyhedral subset. Choose
affine functionals $\lambda_1,\dots, \lambda_m: \RR^n \to \RR$
such that 
\[
S = \{x \in \RR^n: \lambda_i(x) \geq 0 \qquad (i = 1,\dots,m)\}.
\]
A \emph{facet} of $S$ is a nonempty subset of $S$ of the form
\[
B = \{x \in S: \lambda_i(x) = 0 \qquad (i \in I)\}
\]
for some subset $I$ of $\{1,\dots,m\}$; this definition does not depend
on the choice of $\lambda_1,\dots,\lambda_m$. A facet is \emph{proper}
if it is not equal to $S$. Note that the union of the proper facets
of $S$ equals $S \setminus \inte(S)$. An element $x \in S$ forming a facet
by itself is called a \emph{vertex} of $S$.
\end{defn}

\begin{defn} \label{D:polyhedral}
For $S \subseteq \RR^n$, a function $f: S \to \RR$
is \emph{($G$-integral) polyhedral} if it has the form
\begin{equation} \label{eq:polyhedral}
f(x) = \max\{\lambda_1(x), \dots, \lambda_m(x)\} \qquad (x \in S)
\end{equation}
for some ($G$-integral) affine functionals $\lambda_1,\dots,\lambda_m: \RR^n 
\to \RR$. This implies that $f$ is convex and continuous.
\end{defn}

\begin{lemma} \label{L:poly by domains}
Let $S \subseteq \RR^n$ be a polyhedral subset. Let
$f: S \to \RR$ be a function. Then the following conditions are equivalent.
\begin{enumerate}
\item[(a)]
$f$ is polyhedral.
\item[(b)]
$f$ is convex and $S$ is covered by finitely many
domains of affinity for $f$.
\item[(c)]
$f$ is convex and $S$ is covered by 
domains of affinity for $f$ whose ambient functionals have only
finitely many different slopes.
\end{enumerate}
\end{lemma}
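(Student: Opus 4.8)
The plan is to establish the cycle of implications (a) $\Rightarrow$ (b) $\Rightarrow$ (c) $\Rightarrow$ (a). Of these, (b) $\Rightarrow$ (c) is immediate: a finite collection of domains of affinity involves only finitely many ambient functionals, hence only finitely many slopes. So the real content lies in (a) $\Rightarrow$ (b) and, above all, in the converse direction (c) $\Rightarrow$ (a).

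For (a) $\Rightarrow$ (b), I would write $f = \max\{\lambda_1,\dots,\lambda_m\}$ and set $U_i = \{x \in S : f(x) = \lambda_i(x)\}$ for each $i$. Each $U_i$ is closed (since $f$ and $\lambda_i$ are continuous) and convex (it is cut out of $S$ by the inequalities $\lambda_i \geq \lambda_j$), and the $U_i$ cover $S$ because the maximum is attained at every point. I would then discard the $U_i$ whose interior relative to the affine hull of $S$ is empty, i.e.\ those of dimension less than $d = \dim(S)$, and argue that the surviving (full-dimensional) $U_i$ still cover $S$. The point is that the discarded sets all have dimension less than $d$, so their finite union has empty interior relative to the affine hull of $S$; hence the full-dimensional $U_i$ cover a dense subset of $S$, and being a finite union of closed sets their union is itself closed, so it contains all of $\overline{S} = S$ (recall $S$ is closed, being polyhedral). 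Each surviving $U_i$ is then a domain of affinity with ambient functional $\lambda_i$, giving the finite cover required for (b).

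The heart of the matter is (c) $\Rightarrow$ (a). Assume $f$ is convex and $S = \bigcup_\alpha U_\alpha$ is covered by domains of affinity $U_\alpha$ with ambient functionals $\lambda_\alpha$ whose slopes take only finitely many values $\mu^{(1)},\dots,\mu^{(k)}$. The first observation is that $f(x) = \sup_\alpha \lambda_\alpha(x)$ for every $x \in S$: the inequality $f \geq \lambda_\alpha$ everywhere on $S$ is exactly Lemma~\ref{L:domain of aff}, while $f(x) \leq \sup_\alpha \lambda_\alpha(x)$ holds because $x$ lies in some $U_\alpha$, where $f = \lambda_\alpha$. I would then group the $\lambda_\alpha$ by slope: writing $\lambda_\alpha = \mu_\alpha + b_\alpha$ and setting $B_j = \sup\{b_\alpha : \mu_\alpha = \mu^{(j)}\}$, the supremum reorganizes as $f(x) = \max_{1 \le j \le k} \bigl(\mu^{(j)}(x) + B_j\bigr)$.

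The key step --- and the one place where an estimate is genuinely needed --- is to verify that each $B_j$ is finite, for only then is $\mu^{(j)} + B_j$ a bona fide affine functional and the displayed maximum a polyhedral function. But finiteness is forced by $f$ being real-valued: fixing any $x_0 \in S$, Lemma~\ref{L:domain of aff} gives $b_\alpha \le f(x_0) - \mu^{(j)}(x_0)$ for every $\alpha$ with $\mu_\alpha = \mu^{(j)}$, whence $B_j \le f(x_0) - \mu^{(j)}(x_0) < \infty$ (were $B_j$ infinite, the inequalities $f \ge \mu^{(j)} + b_\alpha$ with $b_\alpha \to \infty$ would force $f$ to take the value $+\infty$). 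With all $B_j$ finite, $f = \max_j (\mu^{(j)} + B_j)$ exhibits $f$ as the maximum of finitely many affine functionals, which is (a). I expect this finiteness check, together with the bookkeeping insight that grouping by slope collapses an a priori infinite supremum into a finite maximum, to be the main conceptual obstacle; the remaining verifications are routine applications of Lemma~\ref{L:domain of aff} and the definition of a domain of affinity.
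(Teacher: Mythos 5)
Your proof is correct, and its skeleton is the same as the paper's: both arguments turn on Lemma~\ref{L:domain of aff}, which gives $f(x) = \sup_\alpha \lambda_\alpha(x)$ once $S$ is covered by domains of affinity. The differences are in the details and are worth recording. For the step involving (c), the paper reduces (c) to (b) by observing that Lemma~\ref{L:domain of aff} forbids two \emph{distinct} domains of affinity from having ambient functionals with the same slope (if $f = \mu + b$ on one and $f \geq \mu + b'$ everywhere, and symmetrically, then $b = b'$), so finitely many slopes already means finitely many ambient functionals. You instead go directly from (c) to (a) by grouping the $\lambda_\alpha$ by slope and setting $B_j = \sup\{b_\alpha : \mu_\alpha = \mu^{(j)}\}$; your finiteness check $B_j \leq f(x_0) - \mu^{(j)}(x_0)$ is correct and does the same job, at the cost of not noticing that the supremum is in fact attained (all $b_\alpha$ with a given slope coincide). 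Either route is fine. For (a) $\Rightarrow$ (b), your treatment is actually more careful than the paper's one-line assertion: the sets $U_i = \{x : f(x) = \lambda_i(x)\}$ need not all have nonempty interior relative to the affine hull of $S$, so they are not all domains of affinity in the sense of Definition~\ref{D:domain of aff}, and your argument that the full-dimensional ones still cover $S$ (each degenerate $U_i$ is a convex set with empty relative interior in $\mathrm{aff}(S)$, hence lies in a proper affine subspace; a finite union of these is nowhere dense; the union of the remaining closed $U_i$ is therefore dense and closed in $S$) correctly fills a gap the paper glosses over.
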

\begin{proof}
If $f(x) = \sup_i\{\lambda_i(x)\}$ with each $\lambda_i$ affine,
then it is clear that $f$ is convex and that the domains of affinity
for $f$ corresponding to the $\lambda_i(x)$ cover $S$. 
Hence (a) implies (b). Conversely,
if $f$ is convex and $S$ is covered by domains of affinity for $f$ with
ambient functionals $\lambda_1,\dots,\lambda_m$, then
$f(x) = \sup_i \{\lambda_i(x)\}$ by Lemma~\ref{L:domain of aff}.
Hence (b) implies (a).

It is trivial that (b) implies (c). The reverse implication holds
by Lemma~\ref{L:domain of aff}, which implies that no two distinct
domains of affinity for $f$ can have ambient functionals with the same slope.
\end{proof}

\begin{defn}
Let $S \subseteq \RR^n$ be a convex subset.
A function $f: S \to \RR$ is \emph{locally ($G$-integral) polyhedral} if
for each $x \in S$, there exists a neighborhood $U$ of $x$ in $\RR^n$
such that the restriction of $f$ to $U \cap S$ is
($G$-integral) polyhedral. If $f$ is locally polyhedral, then $f$ is
evidently continuous; by replacing $\RR^n$ with the affine hull of $S$
and applying Lemma~\ref{L:bourbaki} on $\inte(S)$, we see that $f$ is also
convex.
\end{defn}

\begin{lemma} \label{L:locally to poly}
Let $S \subseteq \RR^n$ be a compact convex subset. Then
$f: S \to \RR$ is locally polyhedral if and only if $f$ is polyhedral.
\end{lemma}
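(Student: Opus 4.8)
The plan is to deduce everything from Lemma~\ref{L:poly by domains}, using compactness to upgrade the purely local finiteness supplied by the hypothesis to a global finite cover by domains of affinity. The reverse implication needs no work: if $f = \max_i\{\lambda_i\}$ holds on all of $S$, then the same formula exhibits the restriction of $f$ to $U \cap S$ as polyhedral for every neighborhood $U$, so $f$ is locally polyhedral.

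For the forward implication I would first record that a locally polyhedral function is convex, as observed immediately after its definition. Then, invoking the hypothesis, each $x \in S$ has an open neighborhood $U_x \subseteq \RR^n$ on which (intersected with $S$) $f$ is polyhedral; since $S$ is compact, I extract a finite subcover $U_{x_1}, \dots, U_{x_k}$.

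Next I would analyze each piece separately. Applying the implication (a)$\Rightarrow$(b) of Lemma~\ref{L:poly by domains} to the polyhedral function $f|_{U_{x_i}\cap S}$, I get that $U_{x_i}\cap S$ is covered by finitely many domains of affinity for this restriction. The step requiring care is the verification that each such domain $D$ is also a domain of affinity for $f$ on the whole of $S$: by definition $D\subseteq U_{x_i}\cap S$ carries an affine functional $\lambda$ with $f|_D = \lambda|_D$ and has nonempty interior relative to the affine hull of $U_{x_i}\cap S$, and I must check that this affine hull coincides with that of $S$. This holds because $U_{x_i}\cap S$ is a nonempty relatively open subset of the convex set $S$, hence (by density of the relative interior in a convex set) meets $\inte(S)$ and spans the same affine hull as $S$; the affineness condition $f|_D=\lambda|_D$ is then unchanged when $D$ is viewed inside $S$.

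Finally, taking the union of these covers over $i = 1,\dots,k$ produces finitely many domains of affinity for $f$ covering all of $S$. Since $f$ is convex, the implication (b)$\Rightarrow$(a) of Lemma~\ref{L:poly by domains} then yields that $f$ is polyhedral. I expect the main obstacle to be precisely the affine-hull bookkeeping in the third step, which is what lets the local domains of affinity be reinterpreted globally; the compactness that makes the global cover finite is the conceptual crux but is otherwise routine.
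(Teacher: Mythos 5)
Your proof is correct and follows essentially the same route as the paper: extract a finite subcover by compactness, observe that the resulting domains of affinity cover $S$, and conclude via Lemma~\ref{L:poly by domains}. The affine-hull verification you flag is a detail the paper's one-line proof leaves implicit, but your treatment of it is right.
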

\begin{proof}
Suppose $f$ is locally polyhedral. Then each $x \in S$ admits a neighborhood
$U$ in $\RR^n$ such that the restriction of $f$ to $U \cap S$ is
polyhedral. Since $S$ is compact, it is covered by
finitely many of these neighborhoods. Hence $S$ is covered by finitely
many domains of affinity for $f$; by 
Lemma~\ref{L:poly by domains},
$f$ is polyhedral.
\end{proof}

\section{Extending convex functions}

It is useful to note that the notions of $F$-convexity and ordinary
convexity are essentially equivalent;
this refines the usual assertion that
a convex function on an open convex set 
is continuous \cite[Theorem~10.1]{rock}.

\begin{theorem} \label{T:extend convex}
Let $F$ be a subfield of $\RR$.
Let $S \subseteq \RR^n$ be an open convex subset. Let
$f: S \cap F^n \to \RR$ be a 
function such that for each $x \in F^n$ and each 
$i \in \{1,\dots,n\}$, the restriction of $f$ to 
$S \cap F^n \cap (x + \RR e_i)$ is $F$-convex.
\begin{enumerate}
\item[(a)]
The function $f$ extends uniquely to a continuous function $\tilde{f}$ on $S$
such that for each $x \in \RR^n$ and each 
$i \in \{1,\dots,n\}$, the restriction of $\tilde{f}$ to 
$S \cap (x + \RR e_i)$ is convex.
\item[(b)]
If $f$ is $F$-convex, then $\tilde{f}$ is convex.
\item[(c)]
For any closed intervals $I_1, \dots, I_n$ such that
$I_1 \times \cdots \times I_n \subset S$, the directional derivatives
$f'(x, \pm e_i)$ for $x \in I_1 \times \cdots \times I_n$ and $i \in 
\{1,\dots,n\}$ are bounded.
\end{enumerate}
\end{theorem}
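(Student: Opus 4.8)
The plan is to reduce the entire statement to two quantitative facts about the one-variable restrictions of $f$: that $f$ is bounded on the $F$-rational points of any box compactly contained in $S$, and that its axis-direction difference quotients are uniformly bounded on such a box. Both rest on the elementary observation that $F$-convexity of a one-variable function $g$ forces its difference quotients to be monotone: for $F$-rational $s < u < t$, writing $u$ as the $F$-rational convex combination $\frac{t-u}{t-s}\,s + \frac{u-s}{t-s}\,t$ and applying \eqref{eq:Jensen ineq} gives $\frac{g(u)-g(s)}{u-s} \le \frac{g(t)-g(u)}{t-u}$. In particular each one-sided directional derivative $f'(x,\pm e_i)$ at an $F$-rational point exists as a monotone limit, and any difference quotient of $g$ over a subinterval is squeezed between the difference quotients taken at the two ends.

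Since $S$ is open and $F$ is dense in $\RR$, I would first enclose the given box $I_1 \times \cdots \times I_n$ in a slightly larger closed box $B' = \prod_i [p_i, q_i]$ with $p_i, q_i \in F$ and $B' \subset S$, leaving a margin $\eta > 0$ on every side. The upper bound is then immediate: for $x \in B' \cap F^n$, replacing the coordinates of $x$ one at a time by an endpoint $p_i$ or $q_i$ (all $F$-rational, so every intermediate point lies in $B' \cap F^n$) and using one-variable convexity at each step shows $f(x) \le M^+$, the maximum of $f$ over the $2^n$ $F$-rational corners of $B'$. The lower bound is the main obstacle, and I would obtain it by a reflection argument through the center $O = (\frac{p_i+q_i}{2})_i$, which is again $F$-rational. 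Reflecting in the first coordinate, $F$-convexity gives $f(x) \ge 2 f(O_1, x_2, \dots, x_n) - f(x')$, where $x'$ is the reflection of $x$ (still in $B' \cap F^n$, so $f(x') \le M^+$); iterating over the remaining coordinates drives the middle argument to $O$ and yields a lower bound $M^-$ depending only on $f(O)$ and $M^+$. Arranging that every point sampled in this reflection is $F$-rational and stays inside $B'$ is exactly what the rational choice of $B'$ and $O$ provides, and is the one place where the restriction to $F^n$ genuinely complicates the classical boundedness argument.

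With $|f| \le M := \max(|M^+|, |M^-|)$ on $B' \cap F^n$ in hand, part (c) follows from the squeezing property: for $x$ in the inner box $I_1 \times \cdots \times I_n$, any difference quotient of $f$ along $\pm e_i$ is bounded between quotients reaching to within the $\eta$-margin of $B'$, hence in absolute value by $2M/\eta$; letting the step shrink bounds $f'(x, \pm e_i)$ by $2M/\eta$ at every $F$-rational $x$, and by continuity the same bound will hold for the derivatives of the extension $\tilde f$ at all points of the box. To get (a) I would upgrade this to Lipschitz control: given $F$-rational $x, x'$ in the inner box, pass from $x$ to $x'$ one coordinate at a time, so that $|f(x) - f(x')| \le \frac{2M}{\eta}\sum_i |x_i - x_i'|$. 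Thus $f$ is uniformly continuous on the $F$-rational points of each such box; since these boxes exhaust $S$ and $F^n$ is dense, $f$ extends uniquely to a continuous $\tilde f$ on $S$. The extended function stays convex along each line $x + \RR e_i$, since the one-variable form of \eqref{eq:Jensen ineq} holds at $F$-rational arguments and persists in the limit by continuity of $\tilde f$ and density of $F$.

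Finally, for (b) suppose $f$ itself is $F$-convex on $S \cap F^n$. Given $x, y \in S$ and $t \in [0,1]$, I would choose $F$-rational sequences $x_k \to x$, $y_k \to y$, and $t_k \to t$ with $t_k \in [0,1] \cap F$; then $t_k x_k + (1-t_k) y_k \to tx + (1-t)y$, and applying \eqref{eq:Jensen ineq} to $f$ at these rational arguments and passing to the limit via continuity of $\tilde f$ gives $\tilde f(tx + (1-t)y) \le t\,\tilde f(x) + (1-t)\,\tilde f(y)$, so $\tilde f$ is convex. Uniqueness of $\tilde f$ throughout is forced by continuity together with the density of $F^n$ in $\RR^n$. (Alternatively, once $\tilde f$ is known to be continuous, one could recognize its convexity through supporting functionals via Lemma~\ref{L:bourbaki}, but the direct limiting argument seems cleanest.)
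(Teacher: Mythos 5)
Your proposal is correct, and while its core mechanism (Lipschitz control by squeezing axis-direction difference quotients between quotients taken on a slightly larger $F$-rational box, then extending by uniform continuity and passing Jensen's inequality to the limit) is the same as the paper's, you arrive at the crucial boundedness input by a genuinely different route. The paper proceeds by induction on $n$: it invokes the theorem in dimension $n-1$ to extend $f$ continuously to each face of the nested boxes $B_0 \subset B_1$, and only then knows that $f$ is bounded on $\del(B_0) \cup \del(B_1)$, which feeds the difference-quotient squeeze. You instead prove boundedness on the full $F$-rational box directly and non-inductively: the upper bound by pushing coordinates one at a time to the $F$-rational corners, and the lower bound by reflecting through the $F$-rational center $O$ and iterating, giving $f(x) \geq 2^n f(O) - (2^n-1)M^+$. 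This buys a self-contained, dimension-free argument and sidesteps a small awkwardness in the paper's induction (the faces of a box are not open, so applying the inductive statement there requires working in a relatively open neighborhood inside the face's hyperplane). The price is that your reflection step leans on $\tfrac12 \in F$ and on all reflected points being $F$-rational and staying in $B'$ --- which your choice of $F$-rational $p_i, q_i$ does guarantee. Your treatments of (a), (b), and (c) from that point on match the paper's in substance; the only step worth stating more carefully in a final write-up is the passage from the difference-quotient bound at $F$-rational points to the bound on $f'(x,\pm e_i)$ at arbitrary $x$, which should be phrased via the Lipschitz constant of $\tilde f$ (directional derivatives are limits of difference quotients, all controlled by that constant) rather than via continuity of the derivatives themselves, since one-sided derivatives of convex functions need not be continuous.
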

\begin{proof}
We proceed by induction on $n$, with trivial base case $n=0$.
Pick $x = (x_1,\dots,x_n) \in S$ (not necessarily in $S \cap F^n$).
For $i=1,\dots,n$, pick closed intervals $I_{i,0}, I_{i,1}$ with
endpoints in $F$, such that $x_i$ is contained in the interior of $I_{i,0}$, 
$I_{i,0}$ is contained in the
interior of $I_{i,1}$, and $I_{1,1} \times \cdots \times I_{n,1} \subset S$.

Put $B_j = I_{1,j} \times \cdots \times I_{n,j}$ for $j=0,1$.
By the induction hypothesis, $f$ extends uniquely to a continuous
function on each face of each box $B_j$. In particular, $f$
achieves maximum and minimum values on the boundary $\del(B_j)$ of each
$B_j$.

We now argue that $f$ is Lipschitz continuous on $B_0$.
Pick any $y,z \in B_0 \cap F^n$ whose difference is a nonzero
multiple of some $e_i$. Then there exist a unique affine function
$g: \RR \to \RR^n$ and values $a_1 < 0 \leq a_0 < b_0 \leq 1 < b_1$,
all contained in $F$, such that
\begin{align*}
g(a_0) &= y \\
g(b_0) &= z \\
\{t \in \RR: g(t)\in B_0\} &= [0,1] \\
\{t \in \RR: g(t)\in B_1\} &= [a_1,b_1].
\end{align*}
We have $g(a_1), g(0), g(y), g(z), g(1), g(b_1) \in F^n$,
so the $F$-convexity of the restriction of $f$ to the image of $g$ implies
\[
\frac{g(0) - g(a_1)}{- a_1}
\leq \frac{g(z) - g(y)}{b_0 - a_0}
\leq \frac{g(b_1) - g(1)}{b_1 - 1}.
\]
In this expression, the quantities $g(a_1), g(0), g(1), g(b_1)$ are bounded
because $f$ is bounded on $\del(B_0) \cup \del(B_1)$.
The quantities $- a_1$ and $b_1 - 1$ depend only on $i$, so they are 
bounded above and bounded below away from 0.

It now follows that for some $c>0$,
for all $y = (y_1,\dots,y_n), z = (z_1,\dots,z_n)
\in B_0 \cap F^n$, we have 
\[
|f(y) - f(z)| \leq c (|y_1 - z_1| + \cdots + |y_n - z_n|).
\]
This implies that $f$ is Lipschitz continuous on $B_0$,
so $f$ extends uniquely
to a continuous function on $B_0$. 

Since $B_0$ was chosen to contain
an arbitrary $x \in S$ in its interior, we conclude that $f$ extends
uniquely to a continuous function on all of $S$, proving (a).
We deduce (b) as an immediate corollary
because the terms of \eqref{eq:Jensen ineq} vary continuously with the 
arguments. We deduce (c) from the Lipschitz property established above.
\end{proof}

\section{Detecting locally polyhedral functions}

We now give a criterion for local $F$-integral polyhedrality.

\begin{theorem} \label{T:master}
Let $F$ be a subfield of $\RR$.
Let $S \subseteq \RR^n$ be an open convex subset.
Let $f: S \cap F^n \to \RR$ be a continuous function. 
Suppose that the restriction 
of $f$ to every $F$-rational line segment contained in $S$ and
parallel to one of the coordinate axes
is $F$-integral polyhedral. 
Then the continuous extension of $f$ to $S$ given by Theorem~\ref{T:extend
convex} is locally $F$-integral polyhedral.
\end{theorem}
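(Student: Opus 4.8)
The plan is to induct on $n$, the case $n\le 1$ being immediate: a continuous function on an interval whose restriction to every $F$-rational subsegment is $\ZZ$-integral polyhedral is, near each point, the maximum of at most two integer-slope affine functionals. Before the induction I record two facts valid in all dimensions. First, Theorem~\ref{T:extend convex} applies to $f$: the restriction of $f$ to each $F$-rational axis-parallel segment is $F$-integral polyhedral, hence $F$-convex, and $F$-convexity glues across overlapping segments to give $F$-convexity on each whole intersection $S\cap F^n\cap(x+\RR e_i)$; thus $f$ extends to a continuous $\tilde f$ on $S$ that is convex along every coordinate line, with the directional derivatives $f'(\cdot,\pm e_i)$ locally bounded by part~(c). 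Second, on an axis-parallel segment an $F$-integral functional has integer slope in the coordinate direction and takes an $F$-value at any $F$-rational point; hence $f'(x,\pm e_i)\in\ZZ$ for every $x\in F^n$, and $f(S\cap F^n)\subseteq F$. The latter will ensure that the constant terms produced below lie in $F$, so that the functionals we construct are genuinely $F$-integral.

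Fix $x_0\in S$ and a small box $B\ni x_0$, and write $x_0'$ for the projection of $x_0$ to the first $n-1$ coordinates. By the inductive hypothesis, for each value $c$ of the last coordinate the slice function $f_c=\tilde f(\cdot,c)$ is locally $F$-integral polyhedral; in particular it is convex there and, by Lemma~\ref{L:poly by domains}, near $x_0'$ it is covered by finitely many domains of affinity $U_c^{(1)},\dots,U_c^{(r)}$ with $F$-integral ambient functionals $\mu^{(j)}$. The key step is to thicken each $(n-1)$-dimensional slice-domain to a full-dimensional domain of affinity for $\tilde f$. On $U_c^{(j)}$ the hypotheses of Lemma~\ref{L:dir deriv} hold with $z=e_n$: $\tilde f$ is affine on $U_c^{(j)}$ (condition (a)), convex along $e_n$ (condition (b)), and convex along every segment lying in a nearby slice (condition (c)), the last because the neighboring slices are convex by the inductive hypothesis. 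Hence $g(x')=f'(x',e_n)$ is $F$-convex on $U_c^{(j)}$. Applying Theorem~\ref{T:extend convex}(a) to $g$ inside the slice yields a continuous extension; since $g$ is integer-valued on the dense set $U_c^{(j)}\cap F^{n-1}$, this extension is a continuous integer-valued function on a connected set, hence a constant integer $k$. Thus $\tilde f$ rises at the uniform integer rate $k$ in the $e_n$-direction across all of $U_c^{(j)}$.

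It remains to see that this affine behavior persists for a uniform distance. For fixed $x'$ the function $t\mapsto\tilde f(x',c+t)$ is convex piecewise-affine with right slope $k$ at $t=0$, so it agrees with $\mu^{(j)}(x')+kt$ until its first breakpoint $\tau(x')>0$, beyond which — and this is where integrality is decisive — the slope jumps by at least $1$. Hence $h(x',t):=\tilde f(x',c+t)-\mu^{(j)}(x')-kt$ is nonnegative, convex, vanishes for $t\in[0,\tau(x')]$, and satisfies $h(x',t)\ge t-\tau(x')$ for $t\ge\tau(x')$. Choosing $t_0<\tau(x_0')$ and using continuity of $h$ at $(x_0',t_0)$, a small breakpoint $\tau(x')$ would force $h(x',t_0)$ to stay near $t_0>0$, contradicting $h(x_0',t_0)=0$; therefore $\tau$ is bounded below near $x_0'$. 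Consequently $\tilde f$ equals the $F$-integral affine functional $(x',x_n)\mapsto\mu^{(j)}(x')+k(x_n-c)$ on a full-dimensional neighborhood, i.e.\ on a genuine domain of affinity, and the analogous argument applies below the slice. Running this over the finitely many $U_c^{(j)}$ covers a neighborhood of $x_0$ by finitely many domains of affinity whose ambient functionals are $F$-integral.

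Finally one must upgrade this tiling to the representation $\tilde f=\max_j\Lambda_j$ demanded by Definition~\ref{D:polyhedral}; equivalently, one must prove that $\tilde f$ is convex near $x_0$, since convexity does not follow from a decomposition into affine pieces alone (the non-integral example $f(x,y)=xy$ is separately convex yet not convex, and is excluded precisely by the constancy of the transverse derivative derived above). I expect this to be the main obstacle. The route I would take is to verify condition~(c) of Lemma~\ref{L:bourbaki} for the upper semicontinuous $\tilde f$: at each point take $\lambda=\Lambda_j$ for a tile $D_j$ containing it, and check $\tilde f\ge\Lambda_j$ on the neighboring tiles. Along any coordinate segment crossing from $\inte(D_j)$ into an adjacent tile, $\tilde f$ is convex and agrees with the affine $\Lambda_j$ on the incoming part, hence lies above $\Lambda_j$ on the continuation; since the finitely many tiles meet along their facets in convex kinks, chaining these one-dimensional comparisons yields $\tilde f\ge\Lambda_j$ locally. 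Lemma~\ref{L:bourbaki} then gives convexity, and Lemma~\ref{L:poly by domains} promotes the finite cover by domains of affinity to the desired $F$-integral polyhedral representation near $x_0$, completing the induction.
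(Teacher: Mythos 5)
Your skeleton matches the paper's (induction on $n$, slicing off the last coordinate, constancy of the transverse derivative on slice domains of affinity via Lemma~\ref{L:dir deriv}, and convexity via the supporting-functional criterion of Lemma~\ref{L:bourbaki}), but there are two genuine gaps. First, you invoke the inductive hypothesis for the slice $f_c=\tilde f(\cdot,c)$ at \emph{every} real height $c$, and later you use integrality of the slopes of $t\mapsto\tilde f(x',c+t)$ at arbitrary real $x'$ (this is where ``the slope jumps by at least $1$'' and the claim that the right derivative $k$ persists on a nondegenerate interval $[0,\tau(x'))$ come from). But the hypothesis of the theorem controls only $F$-rational axis-parallel segments, so the inductive hypothesis applies only to slices with $c\in F$, and your preliminary fact $f'(x,\pm e_i)\in\ZZ$ is established only for $x\in F^n$. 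The paper closes exactly this hole with Lemma~\ref{L:all trans}: on an $F$-rational hyperplane slice the induction gives a domain of affinity $U$; the function $y\mapsto f'(y,e_n)$ is $F$-convex on $\inte(U)\cap F^n$ by Lemma~\ref{L:dir deriv} and integer-valued there, hence constant, hence integral at every real point of $\inte(U)$; combined with Lemma~\ref{L:integral derivs} this makes every coordinate line locally transintegral polyhedral and permits a reduction to $F=\RR$, after which every slice is legitimate. Without some version of this step your thickening argument is justified only at $F$-rational base points.

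Second, the convexity step --- which you correctly identify as the main obstacle --- is not closed by your sketch. Chaining one-dimensional comparisons fails: once a coordinate segment leaves $\inte(D_j)$ you know only $\tilde f\ge\Lambda_j$ on its continuation, and convexity of $\tilde f$ along the \emph{next} coordinate segment together with an inequality at its starting point does not propagate the inequality further (you would need equality on an initial piece). Concretely, a point diagonally across a vertex from $D_j$ lies on no coordinate line meeting $\inte(D_j)$, so no single comparison reaches it and no chain of them does either. The paper's Lemma~\ref{L:master convex} gets around this by building, in each nearby slice $x+H+te_n$, a genuine domain of affinity with ambient functional $\mu_j$ (via Lemma~\ref{L:build domain}, applied to finitely many well-chosen points $z_{j,k}$ at which $f(z_{j,k}+te_n)$ is affine in $t$), and then invoking Lemma~\ref{L:domain of aff} \emph{inside that slice}, where convexity is already known from the induction hypothesis; this yields $f\ge\mu_j$ on a full slab around $x$ with equality at $x$, which is what Lemma~\ref{L:bourbaki} requires. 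You will need an argument of this shape; a tiling by domains of affinity alone does not yield convexity, as your own example $f(x,y)=xy$ illustrates.
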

The proof will be by induction on $n$, with trivial base case $n=1$. 
In order to break the proof up
into a sequence of lemmas, we must assert a hypothesis that will be
available during the induction step.

\begin{hypothesis} \label{H:master}
Fix a value of $n$,
and set notation as in Theorem~\ref{T:master}.
By Theorem~\ref{T:extend convex},
$f$ extends uniquely
to a continuous function whose restriction to each line parallel to
a coordinate axis is convex; we will also call this extended function $f$.
Also assume that 
the conclusion of Theorem~\ref{T:master} holds in all cases where
$n$ is replaced by any smaller positive integer.
\end{hypothesis}

\begin{lemma}\label{L:integral derivs}
Let $I \subseteq \RR$ be an open subinterval. Let 
$f: I \to \RR$ be a convex function such that $f'(x, 1) \in \ZZ$ for all
$x \in \QQ$. Then $f$ is locally transintegral polyhedral.
\end{lemma}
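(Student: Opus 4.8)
The plan is to read off the structure of $f$ directly from its slope function. Since $f$ is convex on the open interval $I$, the directional derivative $\phi(x) := f'(x,1)$ is exactly the right derivative $f'_+(x)$; it is finite at every point of $I$ and nondecreasing in $x$ (the standard monotonicity of the derivative of a one-variable convex function, e.g.\ \cite[Theorem~24.1]{rock}). The hypothesis says $\phi$ takes integer values at every rational point. Local polyhedrality is a pointwise assertion, so I fix an arbitrary $x_0 \in I$ and choose rationals $a < x_0 < b$ with $[a,b] \subseteq I$; it then suffices to show that $f$ is transintegral polyhedral on the neighborhood $(a,b)$ of $x_0$.

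First I would bound the slopes: for any $x \in [a,b]$ monotonicity gives $\phi(a) \le \phi(x) \le \phi(b)$, and $\phi(a), \phi(b) \in \ZZ$ by hypothesis, so on the dense set of rational points of $[a,b]$ the function $\phi$ takes only the finitely many integer values $\phi(a), \phi(a)+1, \dots, \phi(b)$. Next I would promote this to a genuine step structure using monotonicity alone: if two rationals $p < q$ in $[a,b]$ satisfy $\phi(p) = \phi(q) = k$, then the squeeze $k = \phi(p) \le \phi(x) \le \phi(q) = k$ forces $\phi \equiv k$ on all of $[p,q]$. Consequently, for each integer value attained on the rationals the set where $\phi = k$ is an interval, and these intervals partition $(a,b)$ up to the finitely many transition points between consecutive values. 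Thus $\phi$ is a nondecreasing integer-valued step function with finitely many jumps, so $f$ is convex and piecewise affine on $(a,b)$ with integer slopes. I expect this middle step to be the only real content of the argument — upgrading integrality of $\phi$ on the \emph{rationals} to a \emph{finite} integer-valued step structure on a whole subinterval. The two things that could go wrong are that $\phi$ might increase through non-integer values at irrational points lying between rationals of equal slope, and that $\phi$ might have infinitely many jumps accumulating near $x_0$; both are excluded purely formally, the first by the monotone squeezing above and the second by the boundedness of the value range $[\phi(a),\phi(b)]$, which admits only finitely many integers.

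Finally I would package this as transintegral polyhedrality. The finitely many maximal intervals on which $\phi$ is constant are domains of affinity for $f$ in the sense of Definition~\ref{D:domain of aff}, and their ambient functionals $\lambda_0,\dots,\lambda_r$ have integer slopes and arbitrary real constant terms. By Lemma~\ref{L:domain of aff} each satisfies $f(x) \ge \lambda_j(x)$ for all $x \in (a,b)$, while at each point $f$ agrees with one of the $\lambda_j$; hence $f = \max_{j} \lambda_j$ on $(a,b)$, which is exactly transintegral polyhedrality on the neighborhood $(a,b)$ of $x_0$. Since $x_0 \in I$ was arbitrary, $f$ is locally transintegral polyhedral.
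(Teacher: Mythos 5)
Your proposal is correct and follows essentially the same route as the paper's proof: bound the slope function between its (integer) values at two rational points $a<b$, use monotonicity to see that each integer slope value is attained on an interval on which $f$ is affine, and conclude that finitely many such domains of affinity cover a neighborhood of the given point. The only cosmetic difference is that the paper establishes affineness on $[x_1,x_2]$ via the three-chord inequality while you deduce it from constancy of the right derivative, and you take $a,b$ rational to guarantee integrality of the endpoint slopes, which slightly cleans up the paper's phrasing.
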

\begin{proof}
For any two $x_1,x_2 \in I$ for which $f'(x_1,1) = f'(x_2,1)$ and any
$x_3 \in (x_1, x_2)$, we have
\begin{align*}
f'(x_1,1) &\leq \frac{f(x_3)-f(x_1)}{x_3 - x_1} \\
&\leq \frac{f(x_2) - f(x_3)}{x_2 - x_3} \\
&\leq f'(x_2, 1) = f'(x_1,1).
\end{align*}
Hence $f$ is affine with integral slope on $[x_1, x_2]$.

Let $J = [a,b]$ be any closed subinterval of $\inte(I)$.
The values $f'(x,1)$ for $x \in I$ are integral and lie in the finite
interval $[f'(a,1), f'(b,1)]$, so they are restricted to a finite set. 
Since $f'(x,1)$ is nondecreasing,
the set of $x$ for which $f'(x,1)$ takes any particular value is
connected. On such a set, by the previous paragraph
$f$ is affine with integral slope. The closures of these sets cover
$J$, so $f$ is transintegral polyhedral on $J$. This proves that
$f$ is locally transintegral polyhedral on $I$.
\end{proof}

\begin{lemma} \label{L:all trans}
Under Hypothesis~\ref{H:master},
for any $x \in S$ and any $i \in \{1,\dots,n\}$,
the restriction of $f$ to $x + \RR e_i$ is locally
transintegral polyhedral.
\end{lemma}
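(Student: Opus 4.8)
The plan is to reduce the statement to the one-variable criterion of Lemma~\ref{L:integral derivs}. Fix $x \in S$ and, after relabeling, take $i = n$; write points of $\RR^n$ as $(x', s)$ with $x' \in \RR^{n-1}$ and $s \in \RR$, so that $x = (x'_0, s_0)$ for some fixed $x'_0 \in \RR^{n-1}$ and the line in question is $\ell = \{(x'_0, s) : s \in \RR\}$. Setting $g(s) = f(x'_0, s)$ on the open interval $I = \{s : (x'_0, s) \in S\}$, Hypothesis~\ref{H:master} makes $g$ a convex function of one variable, and a $\ZZ$-slope affine functional on $\ell$ extends to an $\RR$-integral affine functional on $\RR^n$. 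Thus the restriction of $f$ to $\ell$ is locally transintegral polyhedral as soon as $g$ is, and by Lemma~\ref{L:integral derivs} it suffices to prove $g'(s,1) = f'\bigl((x'_0,s), e_n\bigr) \in \ZZ$ for all $s \in \QQ \cap I$.

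First I would record integrality at the \emph{$F$-rational} transverse positions. For $p \in F^{n-1}$ and $s_1 \in \RR$ with $(p, s_1) \in S$, openness of $S$ and density of $F$ in $\RR$ let me pick $a, b \in F$ with $a < s_1 < b$ and $(p,a),(p,b) \in S$; the segment joining them is $F$-rational, parallel to $e_n$, and lies in $S$, so by hypothesis $f$ restricts to it as an $F$-integral polyhedral function. The $e_n$-slope of each ambient functional is an integer, so the right derivative $f'\bigl((p, s_1), e_n\bigr)$ is an integer. Hence $f'(\,\cdot\,, e_n)$ is integer-valued on the dense set $(F^{n-1} \times \RR) \cap S$.

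The crux is to transfer this to the given line $\ell$, whose transverse coordinate $x'_0$ need not lie in $F^{n-1}$; indeed $\ell$ may meet $F^n$ nowhere, so there is no $F$-rational segment on $\ell$ to invoke directly. I would approximate: choose $p_k \in F^{n-1}$ with $p_k \to x'_0$ (so that $(p_k, s_1)$ and $(p_k, s_1 \pm h)$ lie in $S$ for $k$ large and $h$ small). Fix $s_1 \in I$ at which $g$ is differentiable. The elementary convexity inequalities for $s \mapsto f(p_k, s)$, namely that the backward and forward difference quotients sandwich the right derivative, together with the continuity of $f$ from Theorem~\ref{T:extend convex}, give (letting $k \to \infty$ for each fixed $h$, then $h \to 0^+$)
\[
g'_-(s_1) \;\le\; \liminf_k f'\bigl((p_k, s_1), e_n\bigr) \;\le\; \limsup_k f'\bigl((p_k, s_1), e_n\bigr) \;\le\; g'_+(s_1),
\]
where $g'_-(s_1), g'_+(s_1)$ are the left and right derivatives of $g$ at $s_1$. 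At a differentiability point the outer terms coincide, so the integers $f'\bigl((p_k, s_1), e_n\bigr)$ are squeezed to a common limit and are eventually constant; hence $g'(s_1,1) \in \ZZ$. Since a convex function of one variable is differentiable off a countable set, this holds on a dense subset of $I$, and monotonicity and right continuity of $s \mapsto g'(s,1)$ then force $g'(s,1) \in \ZZ$ for every $s \in I$. Lemma~\ref{L:integral derivs} completes the argument.

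The step I expect to be the main obstacle is exactly this transfer. One might hope to conclude from upper semicontinuity of $f'(\,\cdot\,, e_n)$ alone, but semicontinuity cannot pin the value at a corner of $g$, so the argument genuinely needs the two-sided squeeze and therefore the differentiability of $g$; isolating the dense, co-countable differentiability locus and recovering the remaining values by monotonicity afterwards is what makes the limiting argument go through. It is worth noting that this proof uses only the continuity and coordinatewise convexity of $f$ together with the polyhedrality hypothesis on $F$-rational coordinate segments, and does not appeal to the inductive clause of Hypothesis~\ref{H:master}.
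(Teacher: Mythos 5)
Your argument is correct, but it reaches the key fact $f'(x,e_n)\in\ZZ$ by a genuinely different route than the paper. The paper exploits the observation that only the $e_n$-coordinate of $x$ needs to be rational: it restricts $f$ to the $F$-rational hyperplane $\{y: y_n = x_n\}$, invokes the inductive clause of Hypothesis~\ref{H:master} to get local $F$-integral polyhedrality there, extracts a domain of affinity $U$ containing $x$ in its relative interior, and then uses Lemma~\ref{L:dir deriv} to see that $y\mapsto f'(y,e_n)$ is convex on $U$; since it is integer-valued on the dense set $\inte(U)\cap F^n$, it is constant on $\inte(U)$, and $x$ inherits the integer value exactly. You instead transfer integrality by approximation: you establish it at points with $F$-rational transverse coordinates directly from the hypothesis on coordinate-parallel $F$-rational segments, then squeeze $f'((p_k,s_1),e_n)$ between difference quotients that converge (via the continuity from Theorem~\ref{T:extend convex} and one-variable convexity) to $g'_-(s_1)$ and $g'_+(s_1)$, pin the value at differentiability points of $g$, and recover the remaining points from monotonicity and right-continuity of $g'_+$. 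Your route buys a real structural simplification --- as you note, it uses no induction on $n$ and none of the domain-of-affinity machinery, so Lemma~\ref{L:all trans} could be proved before setting up the induction for Theorem~\ref{T:master} --- at the cost of a slightly more delicate limiting argument (the detour through differentiability points is genuinely needed, as you observe). The paper's route is shorter given the inductive scaffolding already in place and gives constancy of $f'(\cdot,e_n)$ on a whole neighborhood in the hyperplane, which is reused in the proof of Lemma~\ref{L:master convex}.
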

\begin{proof}
It suffices to treat the case $i=n$.
Write $x = (x_1,\dots,x_n)$. 
By Lemma~\ref{L:integral derivs}, it suffices to check that if 
$x_n \in F$, then $f'(x,e_n) \in \ZZ$.

Let $H$ denote the hyperplane spanned by $e_1,\dots,e_{n-1}$.
Since $x_n \in F$, the hyperplane $x + H$ is $F$-rational.
By Hypothesis~\ref{H:master},
the restriction of $f$ to
$x + H$ is locally $F$-integral polyhedral.
Consequently, there exists an $F$-rational polyhedral
subset $U$ of $x+H$ such that $x \in \inte(U)$
and $f$ is affine on $U$.
The set $\inte(U) \cap F^n$ is dense in $U$ because $U$ is 
$F$-rational polyhedral.
By Lemma~\ref{L:dir deriv}, the function $f'(y, e_n)$ is 
$F$-convex for $y \in \inte(U) \cap F^n$; on the other hand,
for $y \in \inte(U) \cap F^n$,
we have $f'(y, e_n) \in \ZZ$ by Hypothesis~\ref{H:master}.
Hence $f'(y,e_n)$ is equal to a constant integer value on all of $\inte(U)$.
In particular, $f'(x, e_n) \in \ZZ$, completing the proof.
\end{proof}

\begin{lemma} \label{L:master convex}
Under Hypothesis~\ref{H:master}, $f$ is convex
and each $x \in S$ belongs to a domain of affinity for $f$
whose ambient functional is transintegral.
\end{lemma}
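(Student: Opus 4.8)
The plan is to combine the one-dimensional information supplied by Lemma~\ref{L:all trans} with the inductive Hypothesis~\ref{H:master} to produce, around each point, a full-dimensional domain of affinity with a transintegral ambient functional. Convexity of $f$ is immediate: $f$ is continuous by construction, and its restriction to each line parallel to a coordinate axis is convex, so by Lemma~\ref{L:bourbaki} (applied on the open convex set $S$, using the upper semicontinuity that comes from continuity) it suffices to check convexity axis by axis, which we already have; alternatively one invokes Theorem~\ref{T:extend convex}(b) once $F$-convexity on the rational points is known. So the real content is the second assertion.

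Fix $x \in S$. First I would use the inductive hypothesis on the hyperplane slice through $x$. Let $H$ be the span of $e_1,\dots,e_{n-1}$; after translating so that $x$ has rational last coordinate (or arguing by continuity), the restriction of $f$ to $(x+H)\cap S$ is locally $F$-integral polyhedral by Hypothesis~\ref{H:master}, so there is an $F$-rational polyhedral neighborhood $U \subseteq x+H$ of $x$ (relative to $x+H$) on which $f$ agrees with a single $F$-integral affine functional $\lambda_0$ in the variables $x_1,\dots,x_{n-1}$. This pins down the slopes of the desired ambient functional in the first $n-1$ coordinates, and they are integers. Next I would bring in the $e_n$-direction: by Lemma~\ref{L:all trans}, the restriction of $f$ to each vertical line $y + \RR e_n$ is locally transintegral polyhedral, and by the argument in Lemma~\ref{L:all trans} the directional derivative $f'(y,e_n)$ is constant and integral as $y$ ranges over $\inte(U)$. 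Call this integer $c_n$. The candidate ambient functional is then $\lambda(x_1,\dots,x_n) = \lambda_0(x_1,\dots,x_{n-1}) + c_n x_n$ (after adjusting the constant term so that $\lambda(x)=f(x)$), which has integer slopes in all $n$ coordinates and constant term in $F$, hence is transintegral.

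It remains to show that $f$ actually coincides with $\lambda$ on a full-dimensional neighborhood of $x$, not merely on the slice $U$ and along the single vertical line through $x$. Here is where I would use Lemma~\ref{L:build domain}. On the slice $U$ we know $f=\lambda$; along each vertical line through a point of $U$ the function $t \mapsto f(\cdot + te_n)$ is convex with derivative $c_n$ at the slice (by the constancy of $f'(\cdot,e_n)$ on $\inte(U)$), so near the slice $f \ge \lambda$ on one side and the affine comparison forces equality on a small two-sided vertical segment above and below each point of $U$. Concretely, I would take a small box $B$ around $x$ of the form $U' \times [x_n-\delta,x_n+\delta]$ with $U' \subseteq \inte(U)$ a small $F$-rational box, verify $f=\lambda$ on enough of its structure (the slice $U'$ plus the vertical derivative data), and then invoke Lemma~\ref{L:build domain} with $T$ a spanning finite set of vertices of $B$ and $z=x$ an interior point to conclude $f=\lambda$ throughout $B$. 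The main obstacle I anticipate is the bookkeeping needed to promote the \emph{pointwise} vertical derivative condition into \emph{affinity} on a two-sided interval: a priori convexity only gives $f \ge \lambda$ on one side of the slice and $f \le \lambda$ on the other, so I would need to argue that the matching slope $c_n$ at the slice, combined with convexity in the $e_n$-direction on both sides, forces $f=\lambda$ on a genuine neighborhood rather than just tangency along $U$; this is precisely the kind of equality-in-Jensen argument that Lemma~\ref{L:build domain} and Corollary~\ref{C:domain convex} are designed to supply, so I expect the difficulty to be organizational rather than conceptual.
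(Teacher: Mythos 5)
There is a genuine gap at the very first step: you dismiss convexity as ``immediate'' because $f$ is continuous and convex along every line parallel to a coordinate axis. But separate convexity along the coordinate directions does \emph{not} imply convexity of a function of several variables --- the paper explicitly lists exactly this implication as an open question in its final section. Lemma~\ref{L:bourbaki} does not let you check convexity ``axis by axis'': its criterion (c) demands, at each point $x$, a single affine functional that minorizes $f$ in \emph{all} directions in a neighborhood of $x$, and producing such a local support is the entire content of the paper's proof. Your fallback, Theorem~\ref{T:extend convex}(b), also does not apply, since its hypothesis is that $f$ is $F$-convex as a function of all variables jointly, which is precisely what is not yet known. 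The paper's argument instead takes the finitely many domains of affinity $V_1,\dots,V_m$ of the slice $x+H$ that contain $x$ (your proposal assumes a single one, which fails when $x$ is a corner point of the slice decomposition), extends each ambient functional $\lambda_j$ to a piecewise-affine convex function $\mu_j$ using the one-sided derivatives $f'(z_j,\pm e_n)$, and shows via Lemma~\ref{L:build domain} and Lemma~\ref{L:domain of aff} that $\sup_j \mu_j$ is a local convex minorant of $f$ touching it at $x$; only then does Lemma~\ref{L:bourbaki}(c) yield convexity.

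The second assertion also cannot be established the way you propose. Your candidate domain of affinity is a two-sided box $U' \times [x_n-\delta, x_n+\delta]$, and you hope that the matching slope $c_n = f'(\cdot,e_n)$ at the slice, together with convexity in the $e_n$-direction, forces $f=\lambda$ on both sides. This is false: convexity along vertical lines permits $f'(y,e_n) \neq -f'(y,-e_n)$, i.e., a genuine kink of $f$ across the hyperplane $x+H$, in which case no affine functional agrees with $f$ on any two-sided vertical neighborhood of $x$. The definition of a domain of affinity only requires that $x$ belong to a set with nonempty interior on which $f$ is affine --- $x$ may sit on its boundary --- and the paper accordingly produces the one-sided functional $\lambda_j(y_1,\dots,y_{n-1}) + f'(z_j,e_n) y_n$, which agrees with $f$ at $x$ and on an open set with $t>0$. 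So the difficulty you flag at the end is not merely organizational: as stated, the two-sided equality you are trying to prove does not hold.
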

\begin{proof}
By Lemma~\ref{L:all trans}, we may reduce to the case $F = \RR$.
We will use the criterion of Lemma~\ref{L:bourbaki} to prove convexity; 
it suffices
to show that for each $x \in S$, we can find a neighborhood $U$ of $x$ in $S$
and a convex function $g: U \to \RR$ such that $f(x) = g(x)$ and 
$f(y) \geq g(y)$ for $y \in U$. In the process, we will exhibit a domain of
affinity for $f$ containing $x$.

Let $H$ denote the hyperplane spanned by $e_1,\dots,e_{n-1}$.
By Hypothesis~\ref{H:master},
the restriction of $f$ to
$x + H$ is locally transintegral polyhedral. In particular, of the domains
of affinity for $f$ on $S \cap (x + H)$, only finitely many contain $x$.
Call these $V_1,\dots,V_m$, and let
$\lambda_1,\dots,\lambda_m: x + H \to \RR$ be the corresponding 
ambient functionals; then by Lemma~\ref{L:domain of aff},
\[
f(y) \geq \sup \{\lambda_1(y), \dots, \lambda_m(y)\} \qquad (y \in S \cap (x+H))
\]
with equality at $y = x$. 

For $j = 1, \dots, m$, extend $\lambda_j$ to an affine functional
on $\RR^n$ which is constant on $\RR e_n$. 
Pick any $z_j \in \inte(V_j)$, and define a function
$\mu_j: (S \cap (x+H)) + \RR e_n \to \RR$ by setting
\[
\mu_j(y + te_n) = \begin{cases}
\lambda_j(y) + t f'(z_j, e_n) & (t \geq 0) \\
\lambda_j(y) - t f'(z_j, -e_n) & (t \leq 0)
\end{cases}
\qquad (y \in S \cap (x+H)).
\]
This definition does not depend on the choice of $z_j$, by an argument
as in the proof of Lemma~\ref{L:all trans}: 
$f'(z_j,\pm e_n)$ takes integer values but
is convex in $z_j$ by Lemma~\ref{L:dir deriv}, 
so must be constant on $\inte(V_j)$.
Moreover, for $\epsilon > 0$ sufficiently small, we may choose
$z_{j,1},\dots,z_{j,n+1} \in \inte(V_j)$ with
$z_{j,n+1}$ in the interior of the simplex with vertices
$z_{j,1},\dots,z_{j,n}$, such that
for $k=1,\dots,n+1$, $f(z_{j,k} + te_n)$ is affine for $t \in [0, \epsilon]$.
Then the convexity of $f$ on $S \cap (x + H + te_n)$ 
(given by Hypothesis~\ref{H:master})
and Lemma~\ref{L:build domain} imply that for $t \in [0, \epsilon]$, 
the restriction of $f$ to $x + H + te_n$ admits a domain of affinity
with ambient functional $\mu_j$. By Lemma~\ref{L:domain of aff},
$f(y) \geq \mu_j(y)$ for $y \in S \cap (x + H + te_n)$.
By arguing similarly for negative $t$, we conclude that
for some $\epsilon > 0$,
\begin{equation} \label{eq:support convex}
f(y) \geq \sup\{\mu_1(y),\dots,\mu_m(y)\} \qquad
(y \in S \cap ((x+H) + [-\epsilon,\epsilon] e_n)),
\end{equation}
with equality at $y = x$. 

By Hypothesis~\ref{H:master}, the restriction of $f$ to
$S \cap (z_j + \RR e_n)$ is convex, so
$f'(z_j,e_n) \geq -f'(z_j,-e_n)$. 
Hence $\mu_j$ is convex, so \eqref{eq:support convex}
implies that the criterion of Lemma~\ref{L:bourbaki} is satisfied at $x$.
Since $x \in S$ was arbitrary, we deduce that $f$ is convex.
Moreover, for each $j$, 
\[
(y_1,\dots,y_n) \mapsto \lambda_j(y_1,\dots,y_{n-1}) + f'(z_j, e_n) y_n
\]
is a transintegral affine functional which agrees with $f$  both
at $x$ and on an open subset of $S$, so its corresponding domain
of affinity contains $x$.
\end{proof}

We are now ready to prove Theorem~\ref{T:master}.
\begin{proof}[Proof of Theorem~\ref{T:master}]
We induct on $n$ with trivial base case $n=1$. Under the induction
hypothesis, 
$f$ extends continuously to $S$ by Theorem~\ref{T:extend convex},
and this extended function is convex by Lemma~\ref{L:master convex}.

We first check that $f$ is polyhedral on any 
closed box $B$ contained in $S$. By Lemma~\ref{L:master convex},
$B$ is covered by domains of affinity for $f$ whose ambient
functionals are transintegral; by
Theorem~\ref{T:extend convex}, the slopes of these functionals
are bounded. Hence there can be only finitely many slopes;
by Lemma~\ref{L:poly by domains}, $f$ is polyhedral.

It now follows that $f$ is locally polyhedral. To check that each ambient
functional is $F$-integral polyhedral, we simply restrict to
some $F$-rational line segments of positive length parallel to 
$e_1,\dots,e_n$ contained in the corresponding domain
of affinity.
\end{proof}

\section{Detecting polyhedral functions on polyhedra}

One can ask also for criteria for detecting $F$-integral polyhedrality
of a function. To articulate results as strong as possible in this
direction, we assume that the function is already known to be 
locally $F$-integral polyhedral; then by Lemma~\ref{L:poly by domains},
it suffices to check that there are only finitely many
ambient functionals. 

\begin{lemma} \label{L:constant dir}
Let $S \subseteq \RR^n$ be a convex subset.
Let $U \subseteq S$ be a convex subset.
Suppose $z \in \RR^n$ is such that for any $x \in U$,
there exists $t > 0$ for which $x + tz \in S$.
Let $f: S \to \RR$ be a locally polyhedral function whose restriction
to $U$ is affine. Then $f'(x,z)$ is constant for $x \in \inte(U)$.
\end{lemma}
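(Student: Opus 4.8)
The plan is to combine the convexity of the directional-derivative function $x \mapsto f'(x,z)$, supplied by Lemma~\ref{L:dir deriv}, with the discreteness of its values forced by local polyhedrality. Since no base field is named, we take $F = \RR$ and work with ordinary convexity throughout; recall that a locally polyhedral function is in particular convex, so $f$ is a convex function on $S$.

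First I would verify the hypotheses of Lemma~\ref{L:dir deriv} for the triple $(S, U, z)$, so as to conclude that $g(x) := f'(x,z)$ is convex on $U$. Hypothesis (a) is exactly the assumption that $f|_U$ is affine. For (b) and (c) the point is that $f$ is already convex on all of $S$, so its restriction to any convex subset is convex; the only thing to check is that the relevant segments stay inside $S$. Given $x \in U$, the hypothesis on $z$ provides $t_x > 0$ with $x + t_x z \in S$, and convexity of $S$ then gives $x + tz \in S$ for all $t \in [0, t_x]$, yielding (b). For (c), given a segment $L$ with endpoints $p, q \in U$, setting $\epsilon = \min(t_p, t_q)$ and writing a point of $L + tz$ as a convex combination of $p + tz$ and $q + tz$ shows $L + tz \subseteq S$ for $t \in [0, \epsilon]$, and convexity of $f$ gives (c). Thus $g$ is convex on $U$; once we check below that it is finite-valued, it is continuous on $\inte(U)$.

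Next I would use local polyhedrality to show that $g$ takes only finitely many values near each point of $\inte(U)$. Fix $x_0 \in \inte(U)$ and a neighborhood $W$ on which $f = \max_i \lambda_i$ for finitely many affine functionals $\lambda_i$ with slopes $\mu_i$. For $x \in W \cap \inte(U)$ and small $t > 0$ the ray $x + tz$ stays in $W \cap S$, and only the functionals active at $x$ (those with $\lambda_i(x) = f(x)$) can remain active along it; a short computation then gives $f'(x,z) = \max\{\mu_i(z) : \lambda_i(x) = f(x)\}$, which in particular lies in the finite set $\{\mu_1(z), \dots, \mu_m(z)\}$. Hence $g$ is real-valued (so no $-\infty$ arises) and locally assumes only finitely many distinct values.

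Finally I would combine continuity with local finiteness: a continuous function taking only finitely many distinct values on a neighborhood of a point is constant on a possibly smaller neighborhood, so $g$ is locally constant on $\inte(U)$. The level set $\{x \in \inte(U) : g(x) = g(x_0)\}$ is therefore open, and it is closed by continuity of $g$; since $\inte(U)$ is convex and hence connected, this set is all of $\inte(U)$, and $g$ is constant. I expect the main obstacle to be the middle step — pinning $f'(x,z)$ to the finite set of slope-values $\mu_i(z)$ uniformly near $x_0$ — since this is exactly where local polyhedrality must be converted into the discreteness that, together with the continuity coming from convexity, rigidifies $g$ into a constant. Checking the hypotheses of Lemma~\ref{L:dir deriv} is by comparison routine bookkeeping about keeping segments inside $S$.
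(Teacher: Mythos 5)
Your proof is correct and takes essentially the same route as the paper's: convexity of $x \mapsto f'(x,z)$ on $\inte(U)$ via Lemma~\ref{L:dir deriv}, local finiteness of its values forced by local polyhedrality, and the observation that a convex function with locally finitely many values must be constant. The paper's proof is a three-line summary of exactly these steps; your version merely fills in the verification of the hypotheses of Lemma~\ref{L:dir deriv} and the continuity/connectedness argument at the end.
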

\begin{proof}
The function $f'(x,z)$ is convex on $\inte(U)$ by
Lemma~\ref{L:dir deriv}. On the other hand, since $f$ is locally polyhedral,
$f'(x,z)$ is locally limited to a finite set. Hence $f'(x,z)$ must be
constant on $\inte(U)$. (Compare the proof of Lemma~\ref{L:all trans}.)
\end{proof}

\begin{defn}
An \emph{affine orthant} in $\RR^n$ is a subset of $\RR^n$ of the form
\[
\{x \in \RR^n: \lambda_i(x) \geq 0 \qquad (i=1,\dots,n)\}
\]
for some affine functionals $\lambda_1,\dots,\lambda_n$
whose slopes are linearly independent.
\end{defn}

\begin{lemma} \label{L:bound1}
Let $S \subseteq \RR^n$ be a polyhedral subset with affine hull $\RR^n$,
for some $n \geq 2$. Assume that $S$ is contained in an affine orthant.
Let $f: S \to \RR^n$ be a continuous convex function.
Suppose that the restriction of $f$ to $\inte(S)$ is locally transintegral
polyhedral, and that the restriction of $f$ to each proper facet of $S$
is polyhedral. Then $f$ is transintegral polyhedral.
\end{lemma}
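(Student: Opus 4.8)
The goal is to show $f$ is transintegral polyhedral, and by Lemma~\ref{L:poly by domains} this reduces to showing that $f$ is convex (given) and covered by domains of affinity whose ambient functionals have only finitely many distinct slopes. On $\inte(S)$ the function is locally transintegral polyhedral, so the slopes of its affine pieces are integer vectors; since $S$ has affine hull $\RR^n$, the (closed) maximal domains of affinity in $\inte(S)$ already cover $S$ up to its boundary, and any ambient functional needed on a facet must agree there with an interior one and so contributes no new slope. Thus the entire lemma reduces to a single assertion: \emph{the set of slopes of the affine pieces of $f$ on $\inte(S)$ is a bounded subset of $\ZZ^n$}, a bounded set of integer vectors being finite.

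To organize the bound I use the orthant. Write $\mu_1,\dots,\mu_n$ for the (linearly independent) slopes of the functionals defining the ambient affine orthant, and let $v_1,\dots,v_n$ be the dual basis, so $\mu_i(v_j)=\delta_{ij}$; these $v_i$ are the edge directions of the orthant, and the recession cone $\calR$ of $S$ is a \emph{pointed} cone contained in $\operatorname{cone}(v_1,\dots,v_n)$. For the upper bounds I use the recession behavior of $f$. If $g$ is the slope of an interior piece with ambient functional $\lambda$, then Lemma~\ref{L:domain of aff} gives $f\geq\lambda$ on all of $S$; letting a point run to infinity along any $z\in\calR$ and dividing by the parameter yields $g(z)\leq \lim_{t\to\infty}(f(x+tz)-f(x))/t$. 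This asymptotic slope is finite on $\calR$: each extreme ray of $\calR$ is the recession direction of an unbounded edge of $S$, which lies in a proper facet, and on that facet $f$ is polyhedral; hence the asymptotic slope is finite on the extreme rays of $\calR$ and, being convex and positively homogeneous, finite on all of $\calR$. This bounds every interior slope $g$ \emph{from above} in every recession direction.

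For the remaining (two-sided) control I pass to the proper facets. On each facet $B$ the restriction $f|_B$ is polyhedral, so for interior pieces abutting $B$ the tangential restrictions $g|_{T_B}$ lie in a finite set; because $S$ sits in the orthant, the tangent spaces of its facets together with $\calR$ span $\RR^n$, so these finite tangential constraints combined with the recession upper bounds confine $g$ to a bounded region. To carry control from the boundary into the deep interior I exploit monotonicity of directional derivatives along the edge directions $v_i$: along a line $x+tv_i$ the slope component $t\mapsto f'(x+tv_i,v_i)$ is nondecreasing by convexity, is bounded above by the asymptotic slope when the line is unbounded, and is pinned by the adjacent boundary value where the line meets $\partial S$; moreover $y\mapsto f'(y,v_i)$ is itself convex by Lemma~\ref{L:dir deriv} and locally integer-valued by local transintegral polyhedrality, hence locally constant exactly as in Lemma~\ref{L:constant dir} and Lemma~\ref{L:all trans}, so only finitely many values can occur.

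I expect the main obstacle to be precisely the last point: controlling the slope component \emph{normal} to an \emph{unbounded} facet. The recession function gives only a one-sided (upper) bound in recession directions, and a facet does not detect its own normal derivative, so the delicate task is to prevent slopes from escaping to infinity near an unbounded facet. I plan to resolve this by combining the convexity of $y\mapsto f'(y,v_i)$ along the (unbounded but polyhedral) facet with the integrality of its values coming from the interior, using that a convex integer-valued function on a convex set is locally constant — the same mechanism already used in Lemma~\ref{L:all trans} and Lemma~\ref{L:master convex}. Once the slope set is shown to be bounded, integrality forces it to be finite, and Lemma~\ref{L:poly by domains} then yields that $f$ is polyhedral with integer slopes, i.e.\ transintegral polyhedral.
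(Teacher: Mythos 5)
Your proposal is correct, and its core mechanism is the one the paper uses: for each ambient slope $\mu$ of an interior domain of affinity, bound $\mu(z)$ in $n$ linearly independent directions $z$ by sandwiching the directional derivative $f'(\cdot,z)$ along a line between values taken at boundary points; observe that the boundary contributes only finitely many such values (since $f$ is polyhedral on each proper facet, the boundary decomposes into finitely many relatively open convex pieces on which $f$ is affine, and on each piece the transverse derivative is convex by Lemma~\ref{L:dir deriv} and hence constant, exactly as in Lemma~\ref{L:constant dir}); then conclude finiteness of the slope set from integrality and Lemma~\ref{L:poly by domains}. Where you genuinely diverge is the choice of directions. The paper picks a basis $z_1,\dots,z_n$ with $\mu_1(z_i),\dots,\mu_n(z_i)$ all nonzero and \emph{not all of the same sign}; this one trick forces every line in direction $z_i$ to meet the orthant, hence $S$, in a \emph{bounded} interval, so both the upper and the lower bound on $f'(x,z_i)$ come from the two boundary endpoints and no recession analysis is needed. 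You instead use the dual (edge) directions $v_i$ of the orthant, for which the ray in the $+v_i$ direction may remain in $S$ forever, and you pay for that with an extra argument: the recession function of $f$ is finite on each extreme ray of the recession cone because such a ray is parallel to an unbounded edge of $S$ on which $f$ is polyhedral, hence finite on the whole pointed recession cone by sublinearity. That argument is correct and is the honest price of your choice of directions; the paper's choice simply avoids the bill. One sentence of yours does no real work and should be cut or demoted: the claim that the tangent spaces of the facets together with the recession cone span $\RR^n$ and thereby ``confine $g$ to a bounded region'' is not the operative mechanism, since facet data only constrains pieces abutting that facet; the propagation into the deep interior is exactly the line-by-line convexity sandwich you describe in the next sentence, and your proof is complete without the spanning remark.
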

\begin{proof}
Since the restriction of $f$ to each proper facet of $S$ is polyhedral,
we can partition $S \setminus \inte(S)$ into finitely many subsets
$P_1, \dots, P_k$, such that each $P_i$ is the interior of a
polyhedral subset contained in some
proper facet, and $f$ is affine on each $P_i$.

Suppose $S$ is contained in the affine orthant defined by the affine
functionals $\lambda_1,\dots,\lambda_n$ with slopes $\mu_1,\dots,\mu_n$.
Let $z_1,\dots,z_n$ be a basis of $\RR^n$ such that for each 
$i \in \{1,\dots,n\}$, the quantities $\mu_1(z_i), \dots, \mu_n(z_i)$
are all nonzero and not all of the same sign.

Suppose $i\in \{1,\dots,n\}$,
$j \in \{1,\dots,k\}$, $x_1, x_2 \in P_j$ are such that for some $i$,
$f'(x_1,z_i)$ and $f'(x_2, z_i)$ are both defined. Then there is 
an open convex set $U \subseteq P_j$ containing both $x_1$ and $x_2$,
such that $f'(x,z_i)$ is defined for all $x \in U$. By Lemma~\ref{L:constant
dir}, $f'(x_1,z_i) = f'(x_2,z_i)$.

Now pick any $x \in S$ and put $I_i = \{t \in \RR: x + tz_i \in S\}$.
By the choice of $z_i$, the interval $I_i$ is always bounded, so we
can write it as $[a,b]$. By the previous paragraph, $f'(x + az_i, z_i)$
and $f'(x + bz_i, -z_i)$ are both limited to finite sets. By convexity,
this limits $f'(x, z_i)$ to a bounded interval. 

Applying this for
$i=1,\dots,n$, we deduce that for any ambient functional $\lambda$
for $f$ with slope $\mu$, $\mu(z_1), \dots,\mu(z_n)$ are limited to bounded
intervals. Since $\mu$ is integral, this limits $\mu$ to a finite set.
By Lemma~\ref{L:poly by domains}, $f$ is polyhedral, as desired.
\end{proof}

At this point, we can already recover a result of the first author and Liang
Xiao \cite[Theorem~3.2.4]{kedlaya-xiao}, by specializing the following
theorem to the case $F = \RR$.
\begin{theorem} \label{T:kedlaya-xiao}
Let $F$ be a subfield of $\RR$.
Let $S$ be an $F$-rational polyhedral subset.
Let $f: S \cap F^n \to \RR$ be a function whose restriction to
any $F$-rational line is $F$-integral
polyhedral. Then $f$ is $F$-integral polyhedral.
\end{theorem}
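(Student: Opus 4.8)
The plan is to induct on $d = \dim S$. After restricting to the affine hull of $S$ and choosing an integral chart for this $F$-rational affine subspace (so that $F$-integral functionals and $F$-rational lines are preserved), I may assume $S$ has affine hull $\RR^n$ with $n = d$. The base case $n \le 1$ is immediate: then $S$ lies in a single $F$-rational line and the hypothesis gives the conclusion directly. So I assume $n \ge 2$ and that the theorem holds in all smaller dimensions.

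First I would record the local structure on the interior. Every line through an $F$-point parallel to a coordinate axis is $F$-rational, so the restriction of $f$ to each such $F$-rational segment inside $\inte(S)$ is $F$-integral polyhedral; by Theorem~\ref{T:extend convex}, $f$ extends to a continuous convex function on $\inte(S)$, and by Theorem~\ref{T:master} this extension is locally $F$-integral polyhedral on $\inte(S)$. Next I would treat the boundary: each maximal proper facet $B$ of $S$ is an $F$-rational polyhedron of dimension $n-1$, and an $F$-rational line inside the affine hull of $B$ is an $F$-rational line of $\RR^n$, so $f|_B$ satisfies the hypotheses of the theorem in dimension $n-1$. By the inductive hypothesis $f|_B$ is $F$-integral polyhedral, hence continuous; combined with the interior extension and upper semicontinuity from Lemma~\ref{L:minimal ext1}, this lets one check that $f$ extends to a continuous convex function on all of $S$, which I continue to call $f$.

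The heart of the argument is to promote this local information to global polyhedrality, i.e.\ to bound the number of slopes of the ambient functionals. This is exactly what Lemma~\ref{L:bound1} accomplishes once $S$ is contained in an affine orthant, since its two remaining hypotheses---local transintegral polyhedrality on $\inte(S)$ and polyhedrality on each proper facet---are precisely what the previous paragraph supplies. To reach that situation for an arbitrary, possibly unbounded $S$, I would fix a point of $\inte(S) \cap F^n$ and intersect $S$ with the $2^n$ affine orthants based there, obtaining finitely many full-dimensional $F$-rational polyhedral pieces $S_\sigma$ covering $S$, each contained in an orthant. The proper facets of each $S_\sigma$ lie either in a facet of $S$ or in one of the cutting hyperplanes; in both cases they are $(n-1)$-dimensional $F$-rational polyhedra on which $f$ satisfies the theorem's hypothesis, so by induction $f$ is polyhedral on each. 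Lemma~\ref{L:bound1} then applies to every $S_\sigma$, showing $f|_{S_\sigma}$ has only finitely many integer slopes. Since each ambient functional is a global lower bound for $f$ by Lemma~\ref{L:domain of aff}, while at every point of $S$ the value of $f$ is attained by the functionals coming from a piece containing it, collecting the functionals over all $\sigma$ exhibits $f$ as the maximum of finitely many integer-slope affine functionals; by Lemma~\ref{L:poly by domains}, $f$ is transintegral polyhedral.

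Finally I would upgrade the constant terms from $\RR$ to $F$. Each maximal domain of affinity $U$ has interior contained in $\inte(S)$, where $f$ is locally $F$-integral polyhedral; near an interior point of $U$ the affine function $f|_U$ coincides with the maximum of finitely many $F$-integral functionals, which forces its ambient functional to equal one of them, and hence to be $F$-integral. Thus all finitely many ambient functionals are $F$-integral, and $f$ is $F$-integral polyhedral. I expect the main obstacle to be the third step: the slope bound genuinely fails without some substitute for compactness, and the orthant decomposition---together with the observation that the artificial facets it introduces fall under the inductive hypothesis---is what makes Lemma~\ref{L:bound1} applicable to unbounded $S$. The subsidiary technical point is establishing continuity of $f$ up to $\del S$ before invoking Lemma~\ref{L:bound1}.
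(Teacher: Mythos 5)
Your proposal is correct and follows essentially the same route as the paper: extend $f$ to a continuous convex function via Theorem~\ref{T:extend convex}, get local $F$-integrality on the interior from Theorem~\ref{T:master}, handle the boundary by continuity/induction, and then cut $S$ into the $2^n$ orthant pieces so that Lemma~\ref{L:bound1} plus induction on dimension yields polyhedrality. Your treatment is somewhat more explicit than the paper's (notably the facet-by-facet induction for continuity up to $\del S$ and the final upgrade of the constant terms to $F$), but the decomposition and the key lemma are the same.
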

\begin{proof}
By Theorem~\ref{T:extend convex},
the restriction of $f$ to $\inte(S)$ extends uniquely to a continuous
convex function $g: \inte(S) \to F^n$.
By Theorem~\ref{T:master},  $g$ is locally $F$-integral polyhedral on 
$\inte(S)$.
Use the formula \eqref{eq:extend} to extend $g$ to a lower
semicontinuous convex function $g: S \to \RR$; then $g$ is continuous
on $S$ by Lemma~\ref{L:minimal ext1}.
For each $F$-rational line $L$ meeting $\inte(S)$, 
$f$ and $g$ agree on $\inte(S) 
\cap F^n \cap L$. In particular, both functions are $F$-integral
polyhedral on $\inte(S) \cap F^n \cap L = \inte(S \cap L) \cap F^n$. 
Since $f$ is also
$F$-integral polyhedral on $S \cap F^n \cap L$ while $g$
is continuous on that same domain, we conclude that $f$ and $g$
agree on $S \cap F^n \cap L$. Since each element of $S \cap F^n$
lies on an $F$-rational line meeting $\inte(S)$, we conclude that
$f$ and $g$ agree on all of $S \cap F^n$.

To prove that $f$ is $F$-integral polyhedral, it now suffices
to prove that $g$ is $F$-integral polyhedral.
It suffices to check this with $S$ replaced by
$S \cap (\RR c_1 e_1 + \cdots + \RR c_n e_n)$ for each
$(c_1,\dots,c_n) \in \{\pm 1\}^n$. In each of these cases, we may
deduce the claim by induction on $\dim(S)$,
using Theorem~\ref{L:bound1}.
\end{proof}

We next prove a much stronger form of Theorem~\ref{T:kedlaya-xiao}.
\begin{lemma} \label{L:bound2}
Assume one of the following sets of hypotheses.
\begin{enumerate}
\item[(a)]
Let $x_1, x_2, x_3 \in \RR^2$ be distinct points
such that the segment $x_2x_3$ is transrational.
Let $\ell_1, \ell_2$ denote the segments $x_1x_2, x_2x_3$.
\item[(b)]
Let $x_1, x_2 \in \RR^2$ be distinct points.
Let $\ell_1, \ell_2$ be parallel closed transrational rays emanating
from $x_1, x_2$.
\end{enumerate}
Let $S$ be the convex hull of $x_1 x_2 \cup \ell_1 \cup \ell_2$.
Suppose that
$f: S \to \RR \cup \{+\infty\}$ is a lower semicontinuous convex function such that 
the restriction of $f$ to $\inte(S)$ (takes finite values and)
is locally transintegral polyhedral,
and the restrictions of $f$ to $x_1 x_2$ and $\ell_1$ (take finite values and)
are polyhedral.
Then $f$ takes finite values and is polyhedral.
\end{lemma}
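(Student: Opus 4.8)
The plan is to reduce, via Lemma~\ref{L:poly by domains}, to showing that only finitely many slopes occur among the domains of affinity of $f$ on $\inte(S)$, and then to upgrade the transrational frontier $\ell_2$ to a facet on which $f$ is polyhedral, so that Lemma~\ref{L:bound1} can finish the job. Since $f$ is locally transintegral polyhedral on $\inte(S)$, any compact subset of $\inte(S)$ meets only finitely many domains of affinity (as in Lemma~\ref{L:locally to poly}), so new slopes can accumulate only as one approaches $\partial S$ or, in case (b), runs off to infinity along the rays. Near $x_1x_2$ and near $\ell_1$ the directional derivatives are controlled by the assumed polyhedrality there, exactly as in the proof of Lemma~\ref{L:bound1}; the entire difficulty is concentrated along $\ell_2$, where nothing is assumed a priori.

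The crux is thus to prove that $f$ is finite and polyhedral on $\ell_2$. Let $v$ be a primitive integer vector spanning the direction of $\ell_2$, available because $\ell_2$ is transrational. First I would show that the one-sided derivative $f'(x,v)$ along $\ell_2$ is integer-valued at every transrational $x \in \ell_2$. For $x$ in the relative interior of $\ell_2$, this derivative is squeezed between directional derivatives $f'(y,v)$ at interior points $y \to x$; each such interior value equals $\mu_y \cdot v \in \ZZ$, since $\mu_y$ is an integer slope and $v$ is an integer vector, and the monotonicity of difference quotients for the convex function $f$ (together with the convexity of $y \mapsto f'(y,v)$ furnished by Lemma~\ref{L:dir deriv}) forces the boundary value to be an attained integer. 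This is precisely the mechanism used in Lemma~\ref{L:all trans}. With $f'(\cdot,v) \in \ZZ$ along $\ell_2$, Lemma~\ref{L:integral derivs} shows that $f|_{\ell_2}$ is locally transintegral polyhedral.

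To promote this to global polyhedrality on $\ell_2$ I must bound the (nondecreasing, integer) slope $f'(\cdot,v)$. In case (a) the endpoints of $\ell_2$ lie on $x_1x_2 \cup \ell_1$, where $f$ is finite, so a convex function with two finite endpoint values has bounded slope and hence attains only finitely many integer slopes. In case (b), $\ell_2$ is a ray and boundedness comes from the recession function of $f$: the recession slope in the fixed direction $v$ is independent of the transverse position across the strip \cite{rock}, so it equals the finite recession slope of $f$ along the parallel ray $\ell_1$, where $f$ is polyhedral. Either way $f|_{\ell_2}$ has finitely many integer slopes, hence is polyhedral; anchoring at the shared vertex $x_2 \in x_1x_2$, where $f$ is finite, and using the bounded slope then shows that $f$ is finite all along $\ell_2$.

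With $\ell_2$ now a facet on which $f$ is polyhedral, $f$ is finite on all of $S$; being convex on the polyhedral set $S$ it is upper semicontinuous by Lemma~\ref{L:minimal ext1}, hence, with the assumed lower semicontinuity, continuous. Every proper facet of $S$ (the base $x_1x_2$, the side $\ell_1$, and $\ell_2$) now carries a polyhedral restriction, and $S$ sits inside an affine orthant: a triangle is bounded, while the semi-strip is cut out by the base functional together with one of the two parallel ray-bounding functionals. Lemma~\ref{L:bound1} then yields that $f$ is transintegral polyhedral on $S$, which in particular gives the stated conclusion. The hard part will be the crux step, namely transferring integrality of the longitudinal slope from the interior to the a priori unknown frontier $\ell_2$ and ruling out accumulation of ever-larger integer slopes there; the convexity squeeze with the integrality of $v$ controls the value at each point and the recession/endpoint argument supplies the uniform bound, but justifying the limit $y \to x$ up to $\ell_2$ is delicate, since finiteness on $\ell_2$ is itself only obtained in the course of this step and must be bootstrapped carefully, first at transrational points and then everywhere by continuity.
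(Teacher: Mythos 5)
Your global strategy---bound the longitudinal slopes on $\inte(S)$, prove $f$ is finite and polyhedral on $\ell_2$, then finish with Lemma~\ref{L:bound1}---matches the paper's, and your case (b) idea of bounding $f'(\cdot,z)$ from above by the recession slope of $f$ computed along the parallel ray $\ell_1$ is a legitimate and arguably cleaner substitute for the paper's projective reparametrization $h: P\setminus\{(1,0)\}\to S$. But the crux, passing from ``locally transintegral polyhedral on $\inte(S)$ with slopes in a finite set'' to ``finite and polyhedral on $\ell_2$,'' is exactly where your argument has a hole, and you flag it yourself without filling it. The mechanism you propose---that $f'(x,v)$ at $x\in\ell_2$ is a limit of the integer values $f'(y,v)=\mu_y(v)$ as interior points $y\to x$---fails for convex functions: for $f(x,y)=\max(x-y,0)$ on the half-plane $y\ge 0$ one has $f'((0,\epsilon),e_1)=0$ for every $\epsilon>0$ yet $f'((0,0),e_1)=1$, so tangential derivatives do not pass continuously to the boundary (and a limit of quantities squeezed between integers need not be an integer); the constancy argument of Lemma~\ref{L:all trans} needs an open neighborhood and is unavailable at a boundary point. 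Your case (a) claim that ``a convex function with two finite endpoint values has bounded slope'' is also false: $t\mapsto\sup_{n\ge 1}\{n(t-1)-1/(4n)\}$ (essentially $-\sqrt{1-t}$) is convex and finite on $[0,1]$, locally transintegral polyhedral on $(0,1)$, with unbounded integer slopes. Finally, in case (b) even the finiteness of $f$ on $\ell_2$ is not automatic, since points of $\ell_2\setminus\{x_2\}$ are not convex combinations of points of $x_1x_2\cup\ell_1$; so the bootstrap you call ``delicate'' is, as written, a logical circle.

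The paper closes this gap with a device you do not have: for each $m$ in the finite set $T$ of possible longitudinal slopes it forms the partial infimal projection $g_m(t)=\inf_u\{f(x_1+t(x_2-x_1)+uz)-mu\}$, shows $g_m$ is convex and bounded above by $f$ on $x_1x_2$, hence extends continuously to $t=1$ by Lemma~\ref{L:minimal ext1}, and then recovers $f=\sup_{m\in T}\{g_m(t)+mu\}$; the right side is a finite supremum of continuous functions, which yields finiteness and continuity of $f$ on $\ell_2$ and exhibits $f|_{\ell_2}$ as a maximum of finitely many affine functions of $u$ with slopes in $T$, all in one stroke. If you wish to avoid that construction you would at minimum need to (i) prove finiteness on $\ell_2$ directly (in case (a) by convexity from the two endpoints, in case (b) via $f(x_2+tv)\le f(x_2)+tM$ with $M$ the common recession slope), so that Lemma~\ref{L:minimal ext1} upgrades $f$ to a continuous function on $S$, and then (ii) realize $f|_{\ell_2}$ as a locally uniform limit of restrictions of $f$ to parallel segments or rays inside $\inte(S)$, each of which is convex with slopes in the fixed finite set $T$ and hence has boundedly many breakpoints, a class closed under such limits. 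As submitted, the decisive step is asserted rather than proved.
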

\begin{proof}
Let $z \in \QQ^2$ be a nonzero vector parallel to $\ell_2$ in the direction
away from $x_2$. We check that 
for $x \in S \setminus \ell_2$,
$f'(x,z)$ is limited to a bounded set independent of $x$.
\begin{enumerate}
\item[(a)]
In this case, we may argue just as in Lemma~\ref{L:bound1}:
there exist $a \leq b \in \RR$ with $x + az \in x_1 x_2$ and $x + bz \in 
\ell_1$,
$f'(x,z)$ is trapped between $f'(x+az,z)$ and $-f'(x+bz,-z)$
by convexity, and each of those is limited to a finite set.
\item[(b)]
As in (a), we see that $f'(x,z)$ is bounded below.
Put
\[
P = \{(a,b) \in \RR^2: 0 \leq a, 0 \leq b, a + b \leq 1\}.
\]
Define the bijection $h: P \setminus \{(1,0)\} \to S$ by
\[
h(a,b) = x_1 + \frac{a}{2-2a-b} z + \frac{b}{2-2a-b} (x_2-x_1).
\]
Then for any affine functional $\lambda$ on $\RR^2$,
$(2-2a-b) \lambda \circ h$ is again an affine functional on $\RR^2$
(although transintegrality may not be preserved).
Consequently, $F = (2-2a-b) f \circ h$ is locally polyhedral on
$\inte(P)$ and polyhedral on the horizontal and vertical proper facets
of $P$. 
Using the formula \eqref{eq:extend}, we may extend $F$ to
a lower semicontinuous convex function $F: P \to \RR \cup \{+\infty\}$.
Since $F$ is bounded on a subset (the union of the horizontal and 
vertical facets) with convex hull $P$, $F$ takes finite values everywhere.
Hence $F$ is continuous by Lemma~\ref{L:minimal ext1}.

For any $c,d \geq 0$ not both zero with $d < c$, we have
\begin{align*}
F(1,0) &= \lim_{s \to 0^+} F(1-sc, sd) \\
&= \lim_{s \to 0^+}
(2sc - sd) f\left( x_1 + \frac{1-sc}{2sc-sd} z + \frac{sd}{2sc-sd} (x_2-x_1) \right).
\end{align*}
If $\lambda$ is an ambient functional for $f$ with slope $\mu$, by
Lemma~\ref{L:domain of aff} we have
\begin{align*}
F(1,0) &\geq \lim_{s \to 0^+} 
(2sc - sd) \lambda\left( x_1 +  \frac{1-sc}{2sc-sd} z + \frac{sd}{2sc-sd} (x_2 - x_1) \right) \\
&= \lim_{s \to 0^+} 
\mu( (1-sc) z + sd (x_2- x_1)) \\
&= \mu(z).
\end{align*}
Hence $\mu(z)$ is bounded above, as then is $f'(x,z)$ for any $x \in S \setminus \ell_2$.
\end{enumerate}
Since $f$ is locally transintegral
polyhedral and $z \in \QQ^2$, 
$f'(x,z)$ is in fact limited to a \emph{finite} set $T$.
For $m \in T$ and $t \in [0,1)$, define
\[
g_m(t) = \inf_u \{f(x_1 + t(x_2-x_1) + uz) - mu\}.
\]
A straightforward calculation shows that $g_m(t)$ is convex,
as follows.
Given $t_1, t_2 \in [0,1)$ and $w \in [0,1]$, put $t_3 = wt_1 + (1-w)t_2$.
For any $\delta > 0$, we may find $u_1, u_2$ with
\[
f(x_1 + t_i(x_2-x_1) + u_i z) - mu_i \leq g_m(t_i) + \delta \qquad (i=1,2);
\]
then for $u_3 = w u_1 + (1-w)u_2$, we have
\begin{align*}
w g_m(t_1) + (1-w) g_m(t_2) &\geq 
w (f(x_1 + t_1(x_2-x_1) + u_1 z) - mu_1) \\
&+
(1-w) (f(x_1 + t_2(x_2-x_1) + u_2 z) - mu_2)  - \delta\\
&\geq f(x_1 + t_3(x_2 - x_1) + u_3 z) - mu_3 - \delta \\
&\geq g_m(t_3) - \delta.
\end{align*}
Since $\delta>0$ was arbitrary, $g_m(t)$ must be convex.

We may extend $g_m(t)$ to a lower semicontinuous convex function
$g_m: [0,1] \to \RR \cup \{+\infty\}$ using \eqref{eq:extend}.
Note that $g_m(t) \leq f(x_1 + t(x_2-x_1)) \leq \max\{f(x_1), f(x_2)\}$
for all $t \in [0,1)$, so $g_m(1) < +\infty$. By Lemma~\ref{L:minimal ext1},
$g_m: [0,1] \to \RR$ is continuous.

For $t \in [0,1)$ and $u$ such that $x_1 + t(x_2 - x_1) + uz \in S$,
we have
\begin{equation} \label{eq:bound}
f(x_1 + t(x_2-x_1) + uz) = \sup_m \{g_m(t) + mu\}.
\end{equation}
The right side of \eqref{eq:bound}
extends to a continuous convex function on all of $S$
with finite values. Since the left side of \eqref{eq:bound} 
is convex and lower 
semicontinuous, it must also take finite values; hence both sides
of \eqref{eq:bound}
are continuous by Lemma~\ref{L:minimal ext1}, and thus must coincide.

The right side of \eqref{eq:bound}, when restricted to $t=1$,
is polyhedral. Hence $f$ is polyhedral on $x_1 x_2$,
so Lemma~\ref{L:bound1} implies that $f$ is polyhedral on $S$.
\end{proof}

\begin{theorem} \label{T:skeleton}
Let $F$ be a subfield of $\RR$.
Let $S \subseteq \RR^n$ be an $F$-rational polyhedral 
subset contained in an affine
orthant.
Let $f: \inte(S) \to \RR$ be a locally $F$-integral polyhedral function.
Suppose that there exist $\ell_1, 
\dots, \ell_m$ with the following properties.
\begin{enumerate}
\item[(a)]
For $i=1,\dots,m$, $\ell_i$ is a convex subset of a line
(not necessarily $F$-rational) and $\inte(\ell_i) \subseteq \inte(S)$.
\item[(b)]
For $i = 1,\dots,m$, $f$ is polyhedral on $\inte(\ell_i)$.
\item[(c)]
For each vertex $Q$ of $S$, there is some $i$ such that 
$Q \in \ell_i$.
\item[(d)]
For each unbounded one-dimensional facet $B$ of $S$, 
there is some $i$ such that $\ell_i$ is a translate of $B$.
\end{enumerate}
Then $f$ is polyhedral.
\end{theorem}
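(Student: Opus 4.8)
The plan is to reduce to the transintegral case $F = \RR$ and to the case where $S$ has affine hull $\RR^n$, and then to induct on $\dim S$, with the one-variable theory and Lemma~\ref{L:bound2} handling the low-dimensional base cases. For the reduction to $F = \RR$ I would slice $S$ by the subspaces $\RR c_1 e_1 + \cdots + \RR c_n e_n$ for $(c_1,\dots,c_n) \in \{\pm 1\}^n$, exactly as in the proof of Theorem~\ref{T:kedlaya-xiao}, so that the $F$-integral conclusion follows from the transintegral one on each slice. Throughout I extend $f$ from $\inte(S)$ to a lower semicontinuous convex function on $S$ via the formula \eqref{eq:extend}; finiteness of this extension on all of $S$ must be checked before invoking continuity through Lemma~\ref{L:minimal ext1}.

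The strategy for the inductive step is to arrange the hypotheses of Lemma~\ref{L:bound1}. Once I know that the extended $f$ takes finite values, is continuous and convex, is locally transintegral polyhedral on $\inte(S)$, and is polyhedral on every proper facet of $S$, the affine-orthant hypothesis lets Lemma~\ref{L:bound1} conclude that $f$ is transintegral polyhedral. The first three properties are already in hand from the interior together with Theorem~\ref{T:extend convex}, so the entire burden is to show that $f$ is polyhedral on each proper facet $B$. The point of the orthant hypothesis here is that the recession cone of $S$ is pointed, which both guarantees that $S$ has vertices (so that hypotheses~(c),(d) have content) and ensures that $f$ is finite on $S$ once it is finite and polyhedral on $\partial S$: every point of $S$ then lies on a segment with both endpoints in $\partial S$, so convexity bounds $f$ there.

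The crux, and the main obstacle, is to prove $f$ polyhedral on each proper facet using only the \emph{interior} skeleton $\ell_1,\dots,\ell_m$, whose defining feature is $\inte(\ell_i) \subseteq \inte(S)$, whereas the relevant data must ultimately live in $\inte(B) \subseteq \partial S$. My plan is to bridge this gap with the two-dimensional propagation Lemma~\ref{L:bound2}, seeding polyhedrality at the vertices and transporting it outward. For a vertex $Q$ of $S$, choose $\ell_i \ni Q$ as in hypothesis~(c) and a transrational edge $E$ of $S$ emanating from $Q$ (edges of an $F$-rational polyhedron have rational direction); the convex hull of $\ell_i \cup E$ is a triangle whose relative interior lies in $\inte(S)$, so Lemma~\ref{L:bound2}(a), applied with $x_2 = Q$, $\ell_1 = \ell_i$, and $\ell_2 = E$, shows $f$ is polyhedral along $E$. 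Varying $E$ over rational directions into $\inte(B)$ produces transrational segments through $Q$ into $\inte(B)$ on which $f$ is polyhedral. For the unbounded edges of $B$ I would instead invoke hypothesis~(d) and Lemma~\ref{L:bound2}(b), using the interior ray parallel to such an edge, together with a connecting segment on which polyhedrality has already been seeded from a nearby vertex, to propagate polyhedrality to a ray lying just inside $B$. The same triangle and strip regions are designed to certify, simultaneously, that $f$ is locally transintegral polyhedral on $\inte(B)$ and that every vertex and unbounded edge of $B$ is covered; this is precisely a skeleton for $f|_B$, so the inductive hypothesis (Theorem~\ref{T:skeleton} in dimension $\dim B < \dim S$) applies and yields that $f|_B$ is polyhedral.

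The delicate points on which I expect to spend the most effort are all located in this transport. First, I must verify that each two-dimensional region built from an interior $\ell_i$ and a boundary edge has relative interior contained in $\inte(S)$, so that the local transintegral polyhedrality hypothesis of Lemma~\ref{L:bound2} is genuinely available; this follows because a segment from an interior point to a boundary point has interior in $\inte(S)$, but it must be checked uniformly. Second, and harder, I must confirm that the finitely many seeded segments and propagated rays actually assemble into a \emph{complete} skeleton for each facet—covering every vertex of $B$ and every unbounded edge of $B$, and furnishing local polyhedrality at generic interior points of $B$—rather than merely giving polyhedrality along a handful of boundary edges; this is where the chaining of Lemma~\ref{L:bound2}(a) (at vertices) with Lemma~\ref{L:bound2}(b) (along recession directions) must be organized carefully. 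Once every proper facet has been handled, $f$ is finite and continuous on $S$ by the remark above and Lemma~\ref{L:minimal ext1}, Lemma~\ref{L:bound1} finishes the transintegral case, and undoing the slicing reduction recovers the $F$-integral statement.
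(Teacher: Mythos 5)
Your overall architecture matches the paper's (extend $f$ by \eqref{eq:extend}, propagate polyhedrality from the interior skeleton to the boundary via Lemma~\ref{L:bound2}, finish with Lemma~\ref{L:bound1} and induction on dimension), but the execution of the crucial propagation step has a genuine gap. You apply Lemma~\ref{L:bound2}(a) to the single triangle $\mathrm{conv}(\ell_i \cup E)$ with $x_2 = Q$ and the target transrational side $\ell_2 = E$, knowing polyhedrality only on the one side $\ell_i$. But the lemma's mechanism (visible in its proof, which traps $f'(x,z)$ for $z$ parallel to $\ell_2$ between directional derivatives at the entry and exit points of the line $x + \RR z$) requires $f$ to be polyhedral on \emph{both} sides of the triangle that flank the target side. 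In your triangle the second flanking side joins the far endpoint of $\ell_i$ to the far endpoint of $E$; both of those points may lie on $\partial S$, that side lies on the relative boundary of the triangle, and nothing you have established gives polyhedrality there. The missing idea is the paper's two-stage use of Lemma~\ref{L:bound2}: first take $x \in \inte(\ell_i)$ and any $z \in \inte(S)$ with $Qz$ $F$-rational, and apply the lemma to the triangle $xQz$ — here both flanking sides $xQ \subseteq \ell_i$ and $xz \subseteq \inte(S)$ are already known polyhedral (the latter by Lemma~\ref{L:locally to poly} on a compact subset of $\inte(S)$) — to conclude that $f$ is polyhedral on every $F$-rational segment from a vertex into the interior. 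Only then does one attack an actual edge $Qz'$ of $S$, by choosing $x \in \inte(S)$ with $xQ$ and $xz'$ both $F$-rational so that stage one supplies polyhedrality on both flanking sides; the unbounded edges are handled analogously with Lemma~\ref{L:bound2}(b) and the translate $\ell_i$ of $B$ from hypothesis (d). Without this intermediate step your chaining never gets off the ground.

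A second, smaller problem is your claim that the same triangles and strips ``certify that $f$ is locally transintegral polyhedral on $\inte(B)$'' for a facet $B$ of dimension at least $2$: a two-dimensional triangle meets such a facet only in a segment, so it cannot produce a relatively open neighborhood in $B$ on which $f$ is polyhedral, and hence cannot verify the hypotheses needed to apply the theorem inductively to $f|_B$. The paper avoids this by establishing polyhedrality only on the one-dimensional facets and then letting Lemma~\ref{L:bound1} together with induction on dimension handle all higher-dimensional facets. Finally, the reduction to $F = \RR$ by slicing along the orthants $\RR c_1 e_1 + \cdots + \RR c_n e_n$ is not needed here: the conclusion is only that $f$ is polyhedral, and $S$ is already assumed to lie in an affine orthant; this step is harmless but does no work.
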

\begin{proof}
We immediately reduce to the case where $\dim(S) = n$.
If $n = 1$, then the $\ell_i$ must cover $S$ and 
so $f$ is automatically polyhedral.
We thus assume $n \geq 2$ hereafter.

Extend $f$ to a lower semicontinuous function $f: S \to \RR \cup \{+\infty\}$
using \eqref{eq:extend}. 
Let $y$ be any vertex of $S$. By hypothesis, one of the $\ell_i$ has endpoint
$y$. Pick $x \in \inte(\ell_i)$; then $x \in \inte(S)$. Then for any 
$z \in \inte(S)$ such that $yz$ is $F$-rational, we may apply
Lemma~\ref{L:bound2} to deduce that $f$ is polyhedral on $yz$.

Let $B$ be a one-dimensional facet of $S$. 
Since $S$ is contained in an affine orthant, $B$ has at least 
one endpoint $y$.
\begin{enumerate}
\item[(a)]
If $B$ is bounded, let $z$ be its other endpoint. 
Choose a two-dimensional $F$-rational plane containing $B$ and passing
through $\inte(S)$. We can find a point $x$ on this plane 
so that $x \in \inte(S)$, and $xy$ and $xz$ are
both $F$-rational. We have from above that $f$ is polyhedral on $xy$ and $xz$;
it is thus polyhedral on $B$ by Lemma~\ref{L:bound2}.
\item[(b)]
If $B$ is unbounded, pick any $x \in \inte(S)$ such that 
$xy$ is $F$-rational. 
By hypothesis,
one of the $\ell_i$ is a translate of $B$. Let $R$ be the ray from $x$
parallel to $B$; then by Lemma~\ref{L:bound2} (applied to $x$ and $\ell_i$),
$f$ is polyhedral on $R$. We have from above that $f$ is polyhedral on
$xy$; thus by Lemma~\ref{L:bound2} again, $f$ is polyhedral on
$B$.
\end{enumerate}
In either case, $f$ is polyhedral on each one-dimensional facet of $S$.
By Lemma~\ref{L:bound1} and induction on dimension, $f$ is polyhedral
on all of $S$, as desired.
\end{proof}

As an immediate corollary, we now obtain a much stronger form of
Theorem~\ref{T:kedlaya-xiao}.
\begin{cor} \label{C:kedlaya-xiao2}
Let $F$ be a subfield of $\RR$.
Let $S$ be an $F$-rational polyhedral subset.
Let $f: \inte(S) \cap F^n \to \RR$ be a function whose restriction to
any $F$-rational line is $F$-integral
polyhedral. Then $f$ is $F$-integral polyhedral.
\end{cor}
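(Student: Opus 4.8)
The plan is to upgrade the hypothesis on lines to a local statement via Theorem~\ref{T:master}, and then feed the result into Theorem~\ref{T:skeleton}, extracting the skeleton segments it needs from that same line hypothesis. First I would reduce to the case $\dim(S) = n$: the affine hull $A$ of $S$ is $F$-rational, and identifying $A$ with $\RR^d$ by means of a point of $A \cap F^n$ together with a $\ZZ$-basis of the lattice $\ZZ^n \cap \vec{A}$ carries $F$-integral polyhedral functions to $F$-integral polyhedral functions and $F$-rational lines to $F$-rational lines, so nothing is lost. With $\dim(S) = n$, the set $\inte(S)$ is open and convex, and the restriction of $f$ to each coordinate-axis-parallel $F$-rational segment in $\inte(S)$ is $F$-integral polyhedral; thus Theorem~\ref{T:master} applies and shows that the continuous extension $g$ of $f$ furnished by Theorem~\ref{T:extend convex} is locally $F$-integral polyhedral on $\inte(S)$. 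I would then extend $g$ to a lower semicontinuous convex function $g : S \to \RR \cup \{+\infty\}$ via \eqref{eq:extend}.

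Since Theorem~\ref{T:skeleton} requires an ambient affine orthant, I would next cover $S$ by finitely many such pieces. Fix $p \in \inte(S) \cap F^n$ and, for $c \in \{\pm 1\}^n$, set $O_c = \{x \in \RR^n : c_i(x_i - p_i) \geq 0 \text{ for all } i\}$ and $S_c = S \cap O_c$. Each $O_c$ is an affine orthant and each $S_c$ is $F$-rational polyhedral with $\inte(S_c) = \inte(S) \cap \inte(O_c) \subseteq \inte(S)$, so $g$ remains locally $F$-integral polyhedral on $\inte(S_c)$; moreover the $S_c$ cover $S$. I would then verify the hypotheses of Theorem~\ref{T:skeleton} for each $S_c$. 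Every vertex $Q$ of $S_c$ solves an $F$-linear system and so lies in $F^n$; I take for it a short $F$-rational segment issuing from $Q$ into $\inte(S_c)$. Every unbounded one-dimensional facet $B$ of $S_c$ has an $F$-rational direction $v$ lying in the recession cone of $S_c$, so I take for it the ray $\{x_0 + tv : t \geq 0\}$ from some $x_0 \in \inte(S_c) \cap F^n$; this ray is a translate of $B$ and, because $v$ is a recession direction, its relative interior stays inside $\inte(S_c)$.

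Each chosen $\ell$ lies on an $F$-rational line $L$, and since $f$ is $F$-integral polyhedral on $L \cap F^n$ while $g$ is its continuous extension, $g$ coincides on $L \cap \inte(S)$ with a fixed maximum of finitely many $F$-integral functionals and is in particular polyhedral on $\inte(\ell)$. Hence conditions (a)--(d) of Theorem~\ref{T:skeleton} hold, and $g$ is polyhedral on each $S_c$; being also locally $F$-integral polyhedral there, its ambient functionals are $F$-integral, so $g$ is $F$-integral polyhedral on each $S_c$. It then remains to patch: as $g$ is finite-valued and $F$-integral polyhedral on each of the finitely many $S_c$ covering $S$, it is finite-valued, continuous, and (being the lower semicontinuous convex extension) convex on $S$. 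Each domain of affinity for $g|_{S_c}$ has nonempty interior in $\RR^n$ and so is a domain of affinity for $g$ on $S$; there are finitely many of them, they cover $S$, and their ambient functionals are $F$-integral, so by Lemma~\ref{L:poly by domains} the function $g$ is the maximum of these finitely many $F$-integral functionals, i.e.\ $F$-integral polyhedral on $S$. Since $g = f$ on $\inte(S) \cap F^n$, the original $f$ is $F$-integral polyhedral, as claimed.

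I expect the step requiring the most care to be the verification of condition (b) of Theorem~\ref{T:skeleton}: transporting $F$-integral polyhedrality along each $F$-rational line from the $F$-points, where $f$ is defined, to the continuous extension $g$, while simultaneously ensuring that the skeleton segments and rays have relative interior inside $\inte(S_c)$ (for the unbounded facets this rests on $v$ being a recession direction). Everything else is bookkeeping riding on the machinery already established.
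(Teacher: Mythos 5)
Your proposal is correct and follows the same route as the paper's (very terse) proof: extend $f$ continuously via Theorem~\ref{T:extend convex}, apply Theorem~\ref{T:master} to get local $F$-integral polyhedrality on $\inte(S)$, then invoke Theorem~\ref{T:skeleton}. The only difference is that you spell out the details the paper leaves implicit --- the decomposition into orthant pieces $S_c$ needed because Theorem~\ref{T:skeleton} assumes an ambient affine orthant, the explicit choice of skeleton segments at vertices and along unbounded facets, and the final patching via Lemma~\ref{L:poly by domains} --- all of which check out.
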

The difference between this result and Theorem~\ref{T:kedlaya-xiao}
is that we do not assume anything about lines contained in the boundary
of $S$.
\begin{proof}
Extend $f$ to a continuous convex function on $\inte(S)$ using
Theorem~\ref{T:extend convex}.
By Theorem~\ref{T:master}, $f$ is locally $F$-integral polyhedral;
by Theorem~\ref{T:skeleton}, $f$ is polyhedral.
\end{proof}

It should also be possible to formulate Theorem~\ref{T:skeleton} without
assuming that $S$ lies in an affine orthant. We leave the following
as an exercise.

\begin{exercise}
Let $F$ be a subfield of $\RR$.
Let $S \subseteq \RR^n$ be an $F$-rational polyhedral subset.
Let $f: \inte(S) \to \RR$ be a locally $F$-integral polyhedral function.
Suppose that there exist $\ell_1, \dots, \ell_m$ 
with the following properties.
\begin{enumerate}
\item[(a)]
For $i = 1,\dots,m$, 
$\ell_i$ is a convex subset of a line (not necessarily $F$-rational)
and $\inte(\ell_i) \subseteq \inte(S)$.
\item[(b)]
For $i = 1,\dots,m$, $f$ is polyhedral on $\inte(\ell_i)$.
\item[(b)]
The convex hull of $\ell_1 \cup \cdots \cup \ell_m$ is equal to $S$.
\end{enumerate}
Then $f$ is polyhedral.
\end{exercise}

\section{Integral polyhedrality from values}

If we restrict attention to integral polyhedral functions, we can do better
than characterizing them 
by their restrictions to one-dimensional rational polyhedra.
We can in fact identify them in terms of their restrictions to
zero-dimensional rational polyhedra, i.e., to rational points,
as long as we appropriately supplement with continuity and 
convexity hypotheses.

\begin{lemma} \label{L:one-dim values}
Let $a \leq b$ be rational numbers. Let $f: [a,b] \to \RR$ be a continuous
convex function such that 
\[
f(x) \in \ZZ + \ZZ x \qquad (x \in (a,b) \cap \QQ).
\]
Then $f$ is integral polyhedral.
\end{lemma}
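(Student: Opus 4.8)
The plan is to exploit a cleaner reformulation of the integrality hypothesis. For a rational number $x = r/s$ in lowest terms one has $\ZZ + \ZZ x = \frac{1}{s}\ZZ$, so the hypothesis says precisely that $sf(x) \in \ZZ$, i.e.\ the denominator of $f(x)$ divides $s$. With this in hand I would proceed in four moves: show that the one-sided derivatives of $f$ are integers at rational points; invoke Lemma~\ref{L:integral derivs} to make $f$ piecewise affine with integer slopes on $(a,b)$; upgrade the intercepts of these pieces to integers; and finally rule out accumulation of pieces at the endpoints. (We may assume $a<b$, the case $a=b$ being degenerate.)

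The heart of the matter is integrality of slopes. Fix a rational $x_0 = r_0/s_0 \in (a,b)$ in lowest terms (with $s_0>0$) and set $m = f'(x_0,1)$, which is finite since $x_0$ is interior. Using B\'ezout, the integer solutions of $r s_0 - r_0 s = 1$ form a coset $(r,s)=(\bar r + k r_0,\ \bar s + k s_0)$; for $k \to +\infty$ the fractions $x_k = r_k/s_k$ are in lowest terms (their denominators divide $1$), decrease strictly to $x_0$ from the right, and eventually lie in $(a,b)$. Writing $f(x_k) = a_k/s_k$ and $f(x_0) = a_0/s_0$ with $a_k,a_0 \in \ZZ$, and using $x_k - x_0 = 1/(s_k s_0)$, the difference quotient collapses to an integer:
\[
\frac{f(x_k) - f(x_0)}{x_k - x_0} = a_k s_0 - a_0 s_k \in \ZZ .
\]
By convexity these right-hand difference quotients decrease to $m$ as $x_k \downarrow x_0$, and a decreasing sequence of integers that converges is eventually constant, so $m \in \ZZ$. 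Hence $f'(x,1)\in\ZZ$ for every rational $x \in (a,b)$, and Lemma~\ref{L:integral derivs} shows $f$ is locally transintegral polyhedral on $(a,b)$; in particular it is piecewise affine there with integer slopes.

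I would then upgrade the intercepts. On any maximal open interval of affinity $J$ one has $f(x) = mx + c$ with $m \in \ZZ$ already known; for $x = r/s \in J$ in lowest terms the hypothesis gives $mr + cs \in \ZZ$, hence $cs \in \ZZ$. Choosing two fractions $r_1/s_1, r_2/s_2 \in J$ with $r_1 s_2 - r_2 s_1 = 1$ (such a unimodular pair exists in any interval of positive length, and its denominators are automatically coprime) forces $c \in \ZZ$. Thus every ambient functional is integral, so $f$ is locally \emph{integral} polyhedral on $(a,b)$.

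Finally I would establish finiteness by ruling out infinitely many pieces. If pieces accumulated at $b = R/S$, their ambient functionals $\lambda_k$ would have strictly increasing integer slopes, and by convexity the values $\lambda_k(b) \in \frac{1}{S}\ZZ$ would be strictly increasing, hence $\to +\infty$; but Lemma~\ref{L:domain of aff} gives $f(b) \ge \lambda_k(b)$ for all $k$, contradicting the finiteness of $f(b)$. The same argument applies at $a$. Therefore finitely many domains of affinity with integral ambient functionals $\lambda_1,\dots,\lambda_\ell$ cover $(a,b)$, and $f = \max_i \lambda_i$ there by Lemma~\ref{L:domain of aff}; since both sides are continuous on $[a,b]$ and agree on the dense set $(a,b)$, the identity persists on $[a,b]$, exhibiting $f$ as integral polyhedral (Lemma~\ref{L:poly by domains}). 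The crux is the slope computation of the second step: the unimodular choice $r_k s_0 - r_0 s_k = 1$ is exactly what converts the \emph{a priori} merely rational difference quotients into integers, and the rest is bookkeeping around convexity and compactness.
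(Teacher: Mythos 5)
Your proof is correct and follows essentially the same route as the paper's: integer difference quotients at a rational point (the paper uses increments $1/(sN)$ where you use a unimodular B\'ezout sequence) stabilize by convexity to give integral slopes, Lemma~\ref{L:integral derivs} then gives local transintegral polyhedrality, the value hypothesis pins the constants to $\ZZ$, and discreteness of the functionals' values at an endpoint, bounded by $f$ there, rules out infinitely many pieces. Your treatment of the integral constant terms (via two fractions with coprime denominators) is in fact a bit more explicit than the paper's.
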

\begin{proof}
(Compare \cite[Lemmas~2.3.1 and~2.4.1]{kedlaya-part3}.)
We first check that the restriction of $f$ to $(a,b)$ is locally integral 
polyhedral.
Given $x \in (a,b) \cap \QQ$, write $x = r/s$ in lowest terms.
For $N$ any sufficiently large positive integer, we have
\[
\frac{f(x + 1/(sN)) - f(x)}{1/(sN)} 
\in sN (\ZZ + \ZZ x + \ZZ(sN)^{-1}) = \ZZ.
\]
As $N \to \infty$,
this difference quotient runs through a sequence of integers which is nonincreasing
(because $f$ is convex)
and bounded below by $-f'(x,-1)$.(because $x \in (a,b)$).
Thus the quotient stabilizes for $N$ large. By convexity, the function
$f$ must be affine with integral slope in a one-sided neighborhood of $x$.
By Lemma~\ref{L:integral derivs}, $f$ is locally transintegral polyhedral;
by comparing values (e.g., for one irrational $x$ in each domain of affinity), 
we see that the constant terms must be in $\ZZ$. Hence
$f$ is locally integral polyhedral.

We next check that $f$ is integral polyhedral.
Suppose 
that the graph of $f$ has slopes which are unbounded below.
The supporting lines of these slopes intersect the vertical line
$x=a$ in points whose $y$-coordinates form a strictly increasing
sequence within the discrete group $\ZZ + \ZZ a$.
This sequence is bounded above by $f(a)$, contradiction.

That contradiction shows that the slopes of
$f$ are bounded below; similarly, the slopes of $f$ are bounded above.
Hence $f$ has only finitely many slopes 
on $(a,b)$, hence is integral polyhedral.
\end{proof}

We now obtain the following result, which is a slight strengthening of 
\cite[Theorem~2.4.2]{kedlaya-part3}.
\begin{theorem} \label{T:integral2}
Let $S \subset \RR^n$ be a bounded rational polyhedral subset.
Let $f: S \cap \QQ^n \to \RR$ be a $\QQ$-convex function 
whose restriction to each rational line segment is continuous,
such that
\[
f(x_1, \dots, x_n) \in \ZZ + \ZZ x_1 + \cdots + \ZZ x_n
\qquad ((x_1, \dots, x_n) \in S \cap \QQ^n).
\]
Then $f$ is integral polyhedral.
\end{theorem}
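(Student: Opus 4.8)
The plan is to first promote $f$ to a $\QQ$-integral polyhedral function by reusing the one-variable analysis behind Lemma~\ref{L:one-dim values} together with Theorem~\ref{T:kedlaya-xiao}, and then to sharpen the rational constant terms to integers by sampling at rational points whose coordinate denominators are coprime to the denominator we wish to kill. A preliminary reduction disposes of the case $\dim S < n$, which is genuinely necessary because in low dimension the ambient functional of a domain of affinity is not unique.

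\textbf{Reduction to the full-dimensional case.} The affine hull of $S$ is a rational affine $k$-plane $x_0 + V$ with $V$ a rational subspace; since $V \cap \ZZ^n$ is a primitive (saturated) sublattice, some $A \in \GL_n(\ZZ)$ carries $V$ onto the span of $e_1,\dots,e_k$, so that $A$ moves the affine hull into a coordinate plane $\{y_{k+1} = r_{k+1},\dots,y_n = r_n\}$ with the $r_i \in \QQ$ fixed. Both the class of integral polyhedral functions and the subgroup $\ZZ + \ZZ x_1 + \cdots + \ZZ x_n$ are invariant under the integral affine group (an integral automorphism preserves $\ZZ^n$ and hence $\ZZ + \sum_i \ZZ x_i$), so after this change of coordinates and projection to the first $k$ coordinates I am reduced to a full-dimensional bounded rational polyhedron $\bar S \subseteq \RR^k$ and a function $\bar f$ taking values in $G_0 + \sum_{i=1}^k \ZZ y_i$, where $G_0 = \ZZ + \sum_{i>k} \ZZ r_i = \frac{1}{w_0}\ZZ$ is a fixed cyclic group containing $\ZZ$. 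The target becomes: show $\bar f$ is $G_0$-integral polyhedral (integer slopes, constant terms in $G_0$), which, by lifting each ambient functional back through $A$, is exactly equivalent to $f$ being integral polyhedral.

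\textbf{Step A ($\QQ$-integral polyhedrality).} I would show the restriction of $\bar f$ to each $\QQ$-rational line is $\QQ$-integral polyhedral and then invoke Theorem~\ref{T:kedlaya-xiao} with $F = \QQ$. On such a line $y = p + tv$ with $p \in \QQ^k$, $v \in \ZZ^k$, the values lie in $G_1 + \ZZ t$ for a fixed cyclic $G_1 \supseteq \ZZ$ (collect the $\gamma + \sum_i a_i p_i$ part and note $\sum_i a_i v_i \in \ZZ$). Extending the line-restriction to a continuous convex $g$ (using $\QQ$-convexity and continuity on rational segments, as permitted by Theorem~\ref{T:extend convex}), difference quotients of $g$ over steps $1/(w M)$, with $w$ the denominator of $G_1 + \ZZ t_0$, are integers; being nonincreasing and bounded below they stabilize, giving $g'(t_0,1) \in \ZZ$ at rational $t_0$, just as in the proof of Lemma~\ref{L:one-dim values}. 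Lemma~\ref{L:integral derivs} then makes $g$ locally transintegral polyhedral, compactness bounds the number of (integer) slopes, and evaluating at one rational point per affine piece shows the constants are rational. Thus Theorem~\ref{T:kedlaya-xiao} yields $\bar f = \max_j \bar\lambda_j$ for finitely many functionals $\bar\lambda_j(y) = \sum_i c_{j,i} y_i + d_j$ with $c_{j,i} \in \ZZ$ and $d_j \in \QQ$.

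\textbf{Step B (integrality of constants).} Because $\bar S$ is full-dimensional, each ambient functional is unique on its domain of affinity $U_j$ (Definition~\ref{D:domain of aff}) and $\inte(U_j)$ is a nonempty open subset of $\RR^k$. For any rational $y \in \inte(U_j)$ the hypothesis gives $\bar f(y) \in G_0 + \sum_i \ZZ y_i$; subtracting the integer-slope part $\sum_i c_{j,i} y_i$ yields $d_j \in G_0 + \sum_i \ZZ y_i = \frac{1}{\mathrm{lcm}(w_0,\,w_y)}\ZZ$, where $w_y$ is the lcm of the coordinate denominators of $y$. Writing the denominator of $d_j$ as $q$, I would choose $y \in \inte(U_j)$ with every coordinate denominator coprime to $q$ (such points are dense in $\RR^k$, so one lies in the open set $\inte(U_j)$); then $w_y$ is coprime to $q$, forcing $q \mid w_0$ and hence $d_j \in G_0$. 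Therefore $\bar f$ is $G_0$-integral polyhedral by Lemma~\ref{L:poly by domains}, and unwinding the reduction gives $f$ integral polyhedral. The crux, and the only new ingredient beyond Theorem~\ref{T:kedlaya-xiao}, is precisely this last collapse: a single rational sample only pins $d_j$ to a sample-dependent lattice, and the theorem is true only because one may sample at points whose denominators are coprime to $q$, intersecting these lattices down to $G_0$.
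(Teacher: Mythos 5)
Your argument follows the paper's own proof essentially step for step: reduce to lines and use the one‑variable integrality analysis of Lemma~\ref{L:one-dim values} to get integral polyhedrality on each rational line, invoke Theorem~\ref{T:kedlaya-xiao} with $F=\QQ$ to conclude $\QQ$-integral polyhedrality, and then pin each constant term down to $\ZZ$ by sampling at rational points whose coordinate denominators are coprime to the denominator of that constant. The extra detail you supply (the $\GL_n(\ZZ)$ reduction to full dimension and the $G_1+\ZZ t$ bookkeeping on a general rational line) is correct and merely makes explicit what the paper leaves terse.
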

\begin{proof}
By Lemma~\ref{L:one-dim values}, 
for each rational line $L$, the restriction of $f$ to $S \cap \QQ^n \cap L$
is integral polyhedral. 
By Theorem~\ref{T:kedlaya-xiao}, 
$f$ is $\QQ$-integral polyhedral.

To check that $f$ is integral polyhedral, we may reduce to the case
$\dim(S) = n$. Extend $f$ to a $\QQ$-integral polyhedral function
on $S$. Let $U$ be a maximal domain of affinity for $f$; since
$f$ is $\QQ$-integral polyhedral, $U$ is rational polyhedral.
Let $\lambda(x) = a_1 x_1 + \cdots + a_n x_n + b$ 
be the ambient functional for $U$; since $f$
is $\QQ$-integral polyhedral,
$a_1,\dots,a_n \in \ZZ$ and $b \in \QQ$.
Write $b = r/s$ in lowest terms.
Pick $(x_1,\dots,x_n) \in U \cap \QQ^n$ such that $s'x_1, \dots, s'x_n 
\in \ZZ$ for some integer $s'$ coprime to $s$ (such points are dense in $U$).
Then on one hand $f(x_1,\dots,x_n) \in (s')^{-1} \ZZ$, while on the other hand
$\lambda(x) \in (s')^{-1} \ZZ + b$. We thus have $b \in s^{-1} \ZZ \cap
(s')^{-1} \ZZ = \ZZ$, so $f$ is integral polyhedral.
\end{proof}

\section{Tropical polynomials}

The subject of \emph{tropical
algebraic geometry} has become quite active lately. 
In that subject, one works not with an ordinary
ring but with the \emph{tropical semiring}, in which the underlying
set is $\RR$, the ``addition'' operation is the maximum, and the
``multiplication''  is ordinary addition. (It is more customary to
take the minimum instead of the maximum, for better correspondence
with valuation theory, but it is more consistent
with the notation in this paper to use the opposite sign convention.)

One is led to ask what a ``tropical polynomial'' is. If we imagine a
polynomial in the variable $x_1, \dots, x_n$
to be a ``sum'' of terms each of which is the ``product''
of a constant with some of the $x_i$ (possibly repeated), we see our
answer at once: a tropical polynomial is merely a transintegral polyhedral
function in which the slopes have nonnegative coefficients. Similarly,
a tropical Laurent polynomial, in which we allow ``dividing'' by 
$x_1,\dots,x_n$ as well, is none other than an arbitrary transintegral
polyhedral function.

It is then reasonable to ask for statements identifying tropical (Laurent)
polynomials from their restrictions to certain ``tropical lines''.
Here is a sample statement.
\begin{theorem} \label{T:tropical}
Let $f: \RR^2 \to \RR$ be a function. Then 
the following are equivalent.
\begin{enumerate}
\item[(a)]
The function $f$ is transintegral polyhedral.
\item[(b)]
The restriction of $f$ to each horizontal and vertical line is 
transintegral polyhedral.
\item[(c)]
The restriction of $f$ to each translate of the set
\begin{gather*}
L = \{(x,y) \in \RR^2: x = 0, y \leq 0 \}
\cup \{(x,y) \in \RR^2: x \leq 0, y = 0\} \\
\cup \, \{(x,y) \in \RR^2: x = y \geq 0\}
\end{gather*}
is transintegral polyhedral.
\end{enumerate}
\end{theorem}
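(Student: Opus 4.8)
The plan is to prove the two easy forward implications (a)$\Rightarrow$(b) and (a)$\Rightarrow$(c) directly, and then to handle the two converse implications (b)$\Rightarrow$(a) and (c)$\Rightarrow$(a) by a single mechanism. The forward implications are immediate from Definition~\ref{D:polyhedral}: if $f=\max_j\lambda_j$ with each $\lambda_j$ transintegral, then the very same finite family of functionals exhibits $f$ as transintegral polyhedral on \emph{any} subset of $\RR^2$, in particular on each horizontal or vertical line and on each translate of $L$.

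For the converse implications I would first reduce both hypotheses to the common statement that \emph{the restriction of $f$ to every line segment parallel to a coordinate axis is transintegral polyhedral}. Under (b) this is given outright. Under (c) it is the key observation that every horizontal segment $[a,b]\times\{c\}$ lies inside the leftward ray $\{(x,c):x\le b\}$, which is one of the three rays of the translate $L+(b,c)$; since the restriction of $f$ to $L+(b,c)$ is transintegral polyhedral, so is its restriction to any sub-segment, and likewise for vertical segments via the downward rays. With this in hand, Theorem~\ref{T:extend convex} applied with $F=\RR$ (where $f$ is already defined on all of $\RR^2$, and the axis-parallel restrictions are convex because they are transintegral polyhedral) shows that $f$ is continuous, and Theorem~\ref{T:master} with $F=\RR$, $S=\RR^2$ then shows that $f$ is locally transintegral polyhedral; in particular $f$ is convex.

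It remains to upgrade from local to global polyhedrality by bounding the slopes, which I would do by an elementary asymptotic computation at the origin $O$. For any domain of affinity with ambient functional $\lambda$ of slope $\mu=(a,b)$, Lemma~\ref{L:domain of aff} gives $f\ge\lambda$ on $\RR^2$, so for every direction $v$,
\[
\lim_{t\to\infty}\frac{f(O+tv)-f(O)}{t}\ \ge\ \lim_{t\to\infty}\frac{\lambda(O+tv)-f(O)}{t}\ =\ \mu(v).
\]
The left-hand limit is finite whenever the ray $\{O+tv:t\ge0\}$ carries a transintegral polyhedral restriction of $f$, since such a one-variable function has finitely many slopes. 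Under (c) this holds for $v=-e_1,\,-e_2,\,e_1+e_2$, the three rays of $L$ itself, yielding $a\ge -c_1$, $b\ge -c_2$, and $a+b\le c_3$ with $c_1,c_2,c_3$ finite; under (b) it holds for $v=\pm e_1,\pm e_2$. Either system confines $(a,b)$ to a bounded region of $\ZZ^2$, so only finitely many slopes occur, and Lemma~\ref{L:poly by domains} concludes that $f$ is transintegral polyhedral, proving (a).

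The conceptual crux, and the one place where the tropical geometry is genuinely used, is the boundedness of the slope region in case (c). The three ray directions of $L$ sum to zero, so the half-planes $\{\mu\cdot(-e_1)\le c_1\}$, $\{\mu\cdot(-e_2)\le c_2\}$, $\{\mu\cdot(e_1+e_2)\le c_3\}$ that they cut out in slope space bound a \emph{compact triangle} rather than an unbounded region; this triangle is exactly the Newton polygon of the sought tropical Laurent polynomial. By contrast, case (b) uses the four axis rays and bounds $(a,b)$ inside a rectangle, which is why (b) is conceptually easier. The main obstacle is thus recognizing this triangle-boundedness; the only routine care required is verifying that each asymptotic difference quotient above is finite, which follows because every ray invoked lies on a set where $f$ has already been shown to be transintegral polyhedral in one variable.
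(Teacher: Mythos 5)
Your proposal is correct, and the forward implications and the first half of the converse (getting local transintegral polyhedrality via Theorem~\ref{T:extend convex} and Theorem~\ref{T:master}, including the nice observation that axis-parallel segments sit inside rays of suitable translates of $L$) coincide with what the paper does, though the paper leaves the continuity step implicit. Where you genuinely diverge is in the passage from local to global: the paper decomposes $\RR^2$ into the four closed quadrants (case (b)) or the closures of the three connected components of $\RR^2 \setminus L$ (case (c)) and invokes Lemma~\ref{L:bound1} on each piece, which is the general-purpose machine it has already built for bounding slopes on polyhedra contained in affine orthants. You instead bound the slopes directly: since $f \geq \lambda$ globally for every ambient functional $\lambda$ (Lemma~\ref{L:domain of aff}), the asymptotic difference quotients of $f$ along the rays of $L$ (resp.\ the four coordinate rays) dominate $\mu(v)$ for the relevant directions $v$, and the fact that the three directions $-e_1$, $-e_2$, $e_1+e_2$ positively span $\RR^2$ traps the integral slope $(a,b)$ in a compact triangle, after which Lemma~\ref{L:poly by domains} finishes. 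Your route is more elementary and self-contained for this two-dimensional statement, and it makes visible the Newton-polygon interpretation of the slope region; the paper's route is less transparent here but reuses Lemma~\ref{L:bound1}, which is the tool that scales to the higher-dimensional and boundary-sensitive results (Theorems~\ref{T:kedlaya-xiao} and~\ref{T:skeleton}). One small point worth making explicit in your write-up: to apply Lemma~\ref{L:poly by domains}(c) you need that $\RR^2$ is actually covered by domains of affinity with transintegral ambient functionals, which is supplied by Lemma~\ref{L:master convex} (or by a short Baire-type argument from local polyhedrality), not merely by the definition of locally polyhedral.
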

The set $L$ is a typical tropical line; it is 
the locus where two of the quantities $x,y,0$
are equal and the third is less. That is, $-x,-y$ could be the valuations of
elements $a,b$ of a nonarchimedean ring for which $a+b$ has valuation 0.
\begin{proof}
It is clear that (a) implies (b) and (c). To see that (b) or (c)
implies (a),
first apply Theorem~\ref{T:master} to deduce
that $f$ is locally transintegral polyhedral.
Then apply Lemma~\ref{L:bound1} to each of the four quadrants
(in case (b)) or to the closures of each of the 
three connected components of $\RR^2 \setminus L$
(in case (c))
to deduce that $f$ is transintegral polyhedral.
\end{proof}

Many results in tropical algebraic geometry are analogues of statements
in ordinary algebraic geometry. For example, one might expect 
Theorem~\ref{T:tropical} to be the tropical analogue of a statement
to the effect that a function of two variables is a polynomial if and only if
no matter how we pick one of the variables and a value for that variable, the
result is a polynomial function of the other variable. This statement is correct
under suitable hypotheses, but not in general.
(There is also an analogue for Laurent polynomials, which we leave as 
an exercise.)

\begin{theorem}
Let $F$ be an infinite field. Let $f: F^2 \to F$ be a function for that all
$z_1, z_2 \in F$, the restrictions of $f$ to 
$\{z_1\} \times F$ and $F \times \{z_1\}$ are polynomial functions.
\begin{enumerate}
\item[(a)] If $F$ is uncountable, then $f$ itself must be a polynomial function.
\item[(b)] If $F$ is countable, then $f$ need not be a polynomial
function.
\end{enumerate}
\end{theorem}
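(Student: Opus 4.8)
The plan is to handle the two parts separately, with the uncountability of $F$ entering at exactly one point in part (a). For (a), I would first attach to each $a \in F$ the degree of the polynomial $y \mapsto f(a,y)$, and stratify $F$ by the level sets $A_N = \{a \in F : \deg_y f(a,\cdot) \le N\}$ (with an identically zero slice placed in $A_0$). These sets increase with $N$ and their union is all of $F$, so $F$ is covered by countably many of them; since $F$ is uncountable, some $A_N$ must be uncountable, hence infinite. This is the one and only place where uncountability is used, and I expect it to be the conceptual crux: for countable $F$ one can arrange every $A_N$ to be finite, which is precisely the escape route that part (b) exploits.

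Fix $N$ with $A_N$ infinite. I would then reconstruct $f$ on $A_N \times F$ by Lagrange interpolation in the second variable. Choosing distinct $y_0,\dots,y_N \in F$, for each $a \in A_N$ the polynomial $f(a,\cdot)$ has degree $\le N$, so its coefficient of $y^j$ equals a fixed $F$-linear combination $\sum_k \ell_{jk}\, f(a,y_k)$ of its sampled values, where the constants $\ell_{jk}$ depend only on the $y_k$. The point is that $\tilde c_j(x) := \sum_k \ell_{jk}\, f(x,y_k)$ is a polynomial in $x$, since each $x \mapsto f(x,y_k)$ is polynomial by hypothesis. Hence $g(x,y) := \sum_{j=0}^N \tilde c_j(x)\, y^j$ is a genuine two-variable polynomial agreeing with $f$ on all of $A_N \times F$. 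To upgrade this to equality everywhere, I would fix $b \in F$ and observe that $x \mapsto f(x,b) - g(x,b)$ is a polynomial in $x$ vanishing on the infinite set $A_N$, hence identically zero; as $b$ was arbitrary, $f = g$ on $F^2$, proving that $f$ is a polynomial.

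For (b), I would produce an explicit counterexample over any countable field. Enumerate $F = \{a_1, a_2, \dots\}$, set $p_0 = 1$ and $p_n(x) = \prod_{i=1}^n (x-a_i)$, and define $f(x,y) = \sum_{n \ge 0} p_n(x)\, p_n(y)$. Evaluating at any $(a_j,a_k)$ annihilates every term with $n \ge \min(j,k)$, so the sum is finite and $f$ is well-defined on all of $F^2$; restricting to $x = a_j$ gives $f(a_j,y) = \sum_{n=0}^{j-1} p_n(a_j)\, p_n(y)$, a polynomial in $y$ of degree exactly $j-1$ because its leading coefficient $p_{j-1}(a_j) = \prod_{i<j}(a_j-a_i)$ is nonzero, and symmetrically in the other variable. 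These slice-degrees are unbounded, which no two-variable polynomial can exhibit, so $f$ is not a polynomial.

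The main obstacle is conceptual rather than computational: recognizing that the entire strength of the uncountable hypothesis is concentrated in producing an \emph{infinite} level set $A_N$ of bounded $y$-degree, after which the argument is just interpolation together with the rigidity of one-variable polynomials on infinite sets. Verifying the counterexample in (b) is routine once the construction is written down, and it makes transparent why the infinite level set is unavailable in the countable case.
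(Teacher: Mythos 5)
Your proof is correct, but both halves take a genuinely different route from the paper's. For (a), the paper argues by contradiction: supposing that for every $n$ one can choose slices $f(z_{1,0},\cdot),\dots,f(z_{1,n},\cdot)$ that are linearly independent, it forms a determinant whose last row consists of these functions and whose other rows are powers of the $z_{1,i}$; this determinant is a nonzero one-variable polynomial, hence has finitely many roots, yet vanishes at every $z_2$ for which $f(\cdot,z_2)$ has degree at most $n-1$. Letting $n$ vary shows only countably many $z_2$ exist, contradicting uncountability, so the slices $f(z_1,\cdot)$ span a finite-dimensional space and have uniformly bounded degree; a second Vandermonde determinant then recovers $f$ as a polynomial. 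Your argument reaches the same interpolation endgame more directly: pigeonhole on the level sets $A_N$ hands you an \emph{infinite} (not necessarily co-small) family of slices of bounded degree, which is all that Lagrange interpolation plus the vanishing of a one-variable polynomial on an infinite set requires. You never need to prove that all slices have bounded degree, and you avoid the determinant trick entirely, at the mild cost of not exhibiting the linear-algebraic structure the paper reuses in its second step. For (b), the paper constructs the counterexample by induction on an enumeration of the horizontal and vertical lines, prescribing degree $i$ on the $i$-th line and interpolating through the finitely many points where it meets earlier lines; your closed-form series $\sum_{n\ge 0} p_n(x)p_n(y)$ with $p_n(x)=\prod_{i\le n}(x-a_i)$ achieves the same unbounded slice degrees explicitly and is easier to verify, since each evaluation truncates to a finite sum and the leading coefficient $p_{j-1}(a_j)$ is visibly nonzero. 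Both approaches are sound.
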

We insist that $F$ be infinite so that the evaluation map $F[x] \to F^F$
taking a polynomial to the corresponding function on $F$ is injective.
By contrast, if $F$ is finite, then any function from $F$ to itself
can be expressed as a polynomial function in infinitely many ways.
\begin{proof}
Suppose that for any positive integer $n$, we can find
$z_{1,0}, \dots, z_{0,n} \in F$ such that
the functions $f(z_{1,i}, \cdot): F \to F$ for $i=0,\dots,n$
are linearly independent over $F$.
Then the function $g: F \to F$ defined by
\[
g(\cdot) = \det \begin{pmatrix}
1 & \cdots & 1 \\
z_{1,0} & \cdots & z_{1,n} \\
\vdots & & \vdots \\
z_{1,0}^{n-1} & \cdots & z_{1,n}^{n-1} \\
f(z_{1,0},\cdot) & \cdots & f(z_{1,n}, \cdot)
\end{pmatrix}
\]
cannot be the zero function. However, it is a polynomial in its argument,
so it has only finitely many roots.

If $z_2 \in F$ is such that $f(\cdot,z_2)$ is a polynomial
of degree at most $n-1$, then $g(z_2) = 0$ because we can write the last
row of the matrix as a linear combination of the others. By the previous
paragraph, there are only finitely many such $z_2$. Since this holds for any
$n$, there can only be countably many $z_2$ such that $f(\cdot,z_2)$ is a
polynomial of any degree. Since $F$ is uncountable, this gives a contradiction.

We conclude that the polynomial functions $f(z_1,\cdot)$ span
a finite dimensional vector space over $F$. In particular, they all
represent polynomials of degree bounded by some nonnegative integer $n$.
Choose $z_{2,0},\dots,z_{2,n} \in F$ distinct; for any $z_1,
z_2 \in F$, we now have
\[
0 = \det \begin{pmatrix}
1 & \cdots & 1 & 1\\
z_{2,0} & \cdots & z_{2,n} & z_2\\
\vdots & & \vdots & \vdots \\
z_{2,0}^{n} & \cdots & z_{2,n}^{n} & z_2^n \\
f(z_1,z_{2,0}) &\cdots & f(z_1, z_{2,n}) & f(z_1,z_2)
\end{pmatrix}.
\]
By expanding in minors along the right column,
we express $f(z_1,z_2)$ as a polynomial in $z_2$ whose coefficients
are themselves polynomials in $z_1$. This proves (a).

To prove (b), choose an ordering $t_1, t_2, \dots$ of the set
\[
T = \{\{z_1\} \times F: z_1 \in F\} \cup \{ F \times \{z_2\}: z_2 \in F\}.
\]
(That is, $T$ is the \emph{set} of horizontal and vertical lines, not their
union.) By induction, we may define 
functions $f_n: t_1 \cup \cdots \cup t_n \to F$ such 
that $f_n$ restricts to a polynomial of degree $i$
on $t_i$ for each $i \in \{1,\dots,n\}$. These combine to give a function
$f: F \times F \to F$ whose restriction to each $t \in T$ is a polynomial,
but these restrictions do not have bounded degree. Hence $f$ cannot itself
be a polynomial.
\end{proof}

One might also like to view the fact that a function $f: \RR^2 \to \RR$
is locally transintegral polyhedral if and only if the same is true of
its restriction to every horizontal line and every vertical line
(Theorem~\ref{T:master}) as the tropical analogue of a statement about
polynomials. For this, we might view a locally transintegral polyhedral 
function as the analogue of
something like a Laurent polynomial, but with infinitely many terms. This
suggests formulating a statement about entire functions, such as the 
following. 

\begin{theorem} \label{T:complex}
Let $f: \CC^2 \to \CC$ be a continuous function such that for all 
$z_1, z_2 \in \CC$, the restrictions of $f$ to
$\{z_1\} \times \CC$ and $\CC \times \{z_2\}$ are entire analytic functions.
Then $f$ is an entire analytic function.
\end{theorem}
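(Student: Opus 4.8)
The plan is to recognize this statement as a form of Osgood's lemma: a function on a domain in $\CC^2$ that is continuous and separately holomorphic is jointly holomorphic. Since analyticity is a local property and the hypotheses are translation-invariant, it suffices to show that $f$ is represented by a convergent power series on some polydisk centered at an arbitrary point $(a_1,a_2) \in \CC^2$. I would fix closed disks $\bar{D}_i = \{\zeta : |\zeta - a_i| \le r_i\}$ for $i = 1,2$; on the compact polydisk $\bar{D}_1 \times \bar{D}_2$ the function $f$ is continuous, hence bounded.

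The key device is to iterate the one-variable Cauchy integral formula, which is legitimate because $f$ is entire in each variable separately. For $(z_1,z_2)$ in the open polydisk, holomorphy of $\zeta_1 \mapsto f(\zeta_1, z_2)$ gives
\[
f(z_1,z_2) = \frac{1}{2\pi i}\int_{|\zeta_1 - a_1| = r_1} \frac{f(\zeta_1, z_2)}{\zeta_1 - z_1}\, d\zeta_1,
\]
and holomorphy of $\zeta_2 \mapsto f(\zeta_1, \zeta_2)$ gives, for each fixed $\zeta_1$ on the first circle,
\[
f(\zeta_1, z_2) = \frac{1}{2\pi i}\int_{|\zeta_2 - a_2| = r_2} \frac{f(\zeta_1, \zeta_2)}{\zeta_2 - z_2}\, d\zeta_2.
\]
Substituting the second identity into the first, and invoking the continuity of $f$ together with Fubini's theorem to recognize the resulting iterated integral as a genuine double integral (the integrand is continuous on the compact product of the two circles, where the denominators do not vanish), I obtain the double Cauchy representation
\[
f(z_1,z_2) = \frac{1}{(2\pi i)^2} \int_{|\zeta_1 - a_1| = r_1}\int_{|\zeta_2 - a_2| = r_2} \frac{f(\zeta_1,\zeta_2)}{(\zeta_1 - z_1)(\zeta_2 - z_2)}\, d\zeta_2\, d\zeta_1.
\]

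From here the conclusion is routine. For $(z_1,z_2)$ ranging over a slightly smaller polydisk, I would expand the Cauchy kernel as a product of geometric series in $(z_i - a_i)/(\zeta_i - a_i)$; this converges uniformly on the product of the bounding circles, so term-by-term integration yields a power series $\sum_{m,n \ge 0} c_{mn} (z_1 - a_1)^m (z_2 - a_2)^n$ converging to $f$, with coefficients $c_{mn}$ given by the corresponding double integrals. Thus $f$ is jointly analytic near $(a_1, a_2)$, and since $(a_1,a_2)$ was arbitrary, $f$ is entire.

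The only step requiring real care — and the one where the continuity hypothesis is essential — is the passage to the double integral. Without continuity this is exactly the content of the much deeper Hartogs separate analyticity theorem; with continuity in hand, the integrand $f(\zeta_1,\zeta_2)/((\zeta_1 - z_1)(\zeta_2 - z_2))$ is continuous on the compact torus $\{|\zeta_1 - a_1| = r_1\} \times \{|\zeta_2 - a_2| = r_2\}$, so Fubini applies and justifies both the substitution above and the subsequent interchange of summation and integration.
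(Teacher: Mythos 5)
Your proposal is correct and follows essentially the same route as the paper: both establish the iterated (double) Cauchy integral representation, using the continuity of $f$ to justify the passage to the double integral, and then read off joint analyticity. The only cosmetic difference is in the last step, where you expand the Cauchy kernel in a geometric series to produce the power series directly, while the paper differentiates under the integral sign; both are standard completions of the same argument.
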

\begin{proof}
Let $C_i$ be any circle in the $z_i$-plane containing the origin.
For any $z_i$ in the interior of $C_i$, by
the Cauchy integral formula applied twice, we have
\[
\int_{C_1} \int_{C_2} \frac{f(w_1,w_2)}{(w_1-z_1)(w_2-z_2)} dw_2\,dw_1
= \int_{C_1} \frac{f(w_1,z_2)}{w_1 - z_1}\,dw_1 \\
= f(z_1,z_2).
\]
The left side is infinitely differentiable (the continuity of $f$
makes it valid to differentiate under the integral signs), so $f$ 
must be as well. Hence $f$ is entire analytic.
\end{proof}

\begin{question}
Is there an analogue of Theorem~\ref{T:complex} in which the restrictions
of $f$ are only assumed to be meromorphic?
\end{question}

\section{Application to $p$-adic differential equations}

We have cited the papers \cite{kedlaya-part3}, \cite{kedlaya-xiao} for
instances of theorems of the sort we have been discussing. 
To illustrate how these theorems may be applied in practice,
we recall just enough of the theory of $p$-adic differential equations
to articulate one of these applications.

\begin{defn}
Let $F$ be a field of characteristic zero complete for a nonarchimedean
absolute value $|\cdot|$. (We do not require $F$ to be discretely
valued; for instance, we might take $F$ to be $\CC_p$, a completed algebraic
closure of the field of $p$-adic numbers.)
Let $S \subseteq \RR^n$ be a bounded transrational polyhedral set.
Let $R_F(S)$ be the ring whose elements are formal Laurent series
\[
\sum_{i_1,\dots,i_n \in \ZZ}
c_{i_1,\dots,i_n} t_1^{i_1} \cdots t_n^{i_n}
\qquad
(c_{i_1,\dots,i_n} \in F)
\]
such that for each $r = (r_1,\dots,r_n) \in S$,
\[
\lim_{i_1,\dots,i_n \to \pm \infty}
|c_{i_1,\dots,i_n}| e^{-i_1r_1 - \cdots - i_n r_n} = 0.
\]
(This limit should be interpreted as follows: for any $\epsilon > 0$,
there are only finitely many $n$-tuples $(i_1,\dots,i_n)$ for which
the quantity inside the limit is greater than $\epsilon$.) This ring
can be interpreted as the global sections of the structure sheaf of
a certain nonarchimedean analytic space, namely the subset of the
affine $n$-space with coordinates $t_1,\dots,t_n$ defined by the condition
\[
(-\log |t_1|, \dots, -\log |t_n|) \in S.
\]
\end{defn}

\begin{defn}
Let $\Omega$ be the $R_F(S)$-module freely generated by symbols $dt_1,\dots,
dt_n$. 
For $j=1,\dots,n$, define the formal partial derivative $\frac{\partial}{\partial t_j}: R_F(S) \to R_F(S)$ by the formula
\[
\frac{\partial}{\partial t_j}\left(\sum_{i_1,\dots,i_n \in \ZZ}
c_{i_1,\dots,i_n} t_1^{i_1} \cdots t_n^{i_n} \right)
= \sum_{i_1,\dots,i_n \in \ZZ}
i_j t_j^{-1} c_{i_1,\dots,i_n} t_1^{i_1} \cdots t_n^{i_n}.
\]
Define the formal exterior derivative $d: R_F(S) \to \Omega$ by the formula
\[
d(f) = \sum_{j=1}^n \frac{\partial}{\partial t_j}(f)\,dt_j.
\]
Let $M$ be a finite free $R_F(S)$-module. A \emph{connection}
on $M$ is an additive map $\nabla: M \to M \otimes_{R_F(S)} \Omega$ satisfying
the Leibniz rule: for $f \in R_F(S)$ and $m \in M$,
\[
\nabla(fm) = f \nabla(m) + m \otimes d(f).
\]
Given a connection $\nabla$, define the maps $D_1, \dots, D_n: M \to M$
by the formula
\[
\nabla(m) = D_1(m) \otimes dt_1 + \cdots + D_n(m) \otimes dt_n;
\]
note that $D_j$ satisfies the Leibniz rule using the derivation
$\frac{\partial}{\partial t_j}$ on $R_F(S)$.
We say that $\nabla$ is \emph{integrable} if $D_1,\dots,D_n$ commute with
each other. 
\end{defn}

For instance, if $M = R_F(S)$, then $\nabla = d$ is an integrable
connection. We next give a numerical measure of the failure of a given
integrable connection to have this special form.

\begin{defn}
For $r \in S$, define the norm $|\cdot|_r$ on
$R_F(S)$ by the formula
\[
\left|
\sum_{i_1,\dots,i_n \in \ZZ}
c_{i_1,\dots,i_n} t_1^{i_1} \cdots t_n^{i_n}
\right|_r
= \sup\{|c_{i_1,\dots,i_n}| e^{-i_1r_1 - \cdots - i_n r_n}:
i_1,\dots,i_n \in \ZZ\};
\]
the definition of $R_F(S)$ makes this quantity finite, and in fact
ensures that the supremum is achieved for at least one tuple
$(i_1,\dots,i_n) \in \ZZ^n$. Let $E_r$ be the completion of
$\Frac R_F(S)$ for this norm; then each $\frac{\partial}{\partial t_j}$
extends continuously to a map $\frac{\partial}{\partial t_j}:
E_r \to E_r$, while $d$ extends continuously to
a map $d: E_r \to \Omega \otimes_{R_F(S)} E_r$.
\end{defn}

\begin{defn}
Let $M$ be a finite free $R_F(S)$-module equipped with an integrable
connection $\nabla$. For $r \in S$,
we extend $\nabla$ to a map
$\nabla: M \otimes_{R_F(S)} E_r \to M 
\otimes_{R_F(S)} \Omega \otimes_{R_F(S)} E_r$;
we correspondingly extend $D_1,\dots,D_n$ to maps from
$M \otimes_{R_F(S)} E_r$ to itself.
Let $N$ be a subquotient of $M \otimes_{R_F(S)} E_r$
in the category of $E_r$-modules on which $D_1,\dots,D_n$ act. 
Pick a basis of $N$, and use it to define a supremum norm on $N$ compatible
with the norm $|\cdot|_r$ on $E_r$. Let $|D_j|_{\spect,N}$ denote the
\emph{spectral norm} of $D_j$ on $N$, that is,
\[
|D_j|_{\spect,N} = \limsup_{s \to \infty} |D_j^s|^{1/s}_N,
\]
where $|D_j^s|_N$ denotes the operator norm of $D_j^s$ on $N$ for the chosen
norm. This quantity does not depend on the choice of the norm.
Define the \emph{intrinsic generic radius of convergence} of $N$
to be
\[
\min \left\{ \frac{|D_j|_{\spect,E_r}}{|D_j|_{\spect,N}}: j \in \{1,\dots,n\}
\right\}.
\]
(This indeed has something to do with the radius of convergence of certain
horizontal sections of a certain differential module.
See \cite[Proposition~1.2.14]{kedlaya-xiao} for a bit more explanation,
and \cite[\S 9.7]{kedlaya-course} for much more discussion.)

Let $N_1,\dots,N_h$ be the Jordan-H\"older constituents of 
$M \otimes_{R_F(S)} E_r$, i.e., the successive quotients in a maximal filtration
by $E_r$-submodules preserved by $D_1,\dots,D_n$. Define the multiset of
\emph{subsidiary radii} of $M$ at $r$ to consist of, for $j=1,\dots,h$,
the intrinsic generic radius of convergence of $N_j$ with multiplicity
$\dim_{E_r}(N_j)$.
\end{defn}

\begin{remark}
In the case of residual characteristic zero, the subsidiary radii are
related to the classical notion of \emph{irregularity} for a meromorphic
connection on a complex analytic variety. See \cite{kedlaya-good1} for
discussion of that case. In the case of residual characteristic $p>0$,
the subsidiary radii are related to wild ramification
of maps between varieties over a field of positive characteristic;
see \cite{kedlaya-swan1}.
\end{remark}

We can now state part of \cite[Theorem~3.3.9]{kedlaya-xiao},
which governs the variation of the subsidiary radii,
and then explain how a polyhedrality theorem intervenes in the proof.

\begin{theorem}
Let $M$ be a finite free $R_F(S)$-module of rank $m$,
equipped with an integrable
connection $\nabla$. For $r \in S$, let
$f_1(M,r) \geq \cdots \geq f_m(M,r)$ be the nonnegative real numbers
such that the subsidiary radii of $M$ at $r$ are equal to
$e^{-f_1(M,r)}, \dots, e^{-f_m(M,r)}$. Define
$F_i(M,r) = f_1(M,r) + \cdots + f_i(M,r)$.
Then for $i=1,\dots,m$, the function $r \mapsto m! F_i(M,r)$ 
is transintegral polyhedral; for $i=m$, the function $r \mapsto F_m(M,r)$
is also transintegral polyhedral.
\end{theorem}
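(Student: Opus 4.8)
The plan is to separate the genuinely $p$-adic content from the convex geometry: I would reduce the full $n$-dimensional polyhedrality statement to a one-dimensional assertion about the variation of the subsidiary radii along lines, and then feed that one-dimensional input directly into the detection machinery of the preceding sections, most efficiently Corollary~\ref{C:kedlaya-xiao2}. Throughout I take $F = \RR$, so that transintegral polyhedral is the same as $\RR$-integral polyhedral. Since $S$ is bounded, it is contained in an affine orthant, so the orthant hypothesis underlying Lemma~\ref{L:bound1} and Theorem~\ref{T:skeleton} (and hence Corollary~\ref{C:kedlaya-xiao2}) is automatically met.

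First I would isolate the analytic heart as a one-dimensional statement: for every transrational line $L$ meeting $\inte(S)$, the restriction of $r \mapsto m! F_i(M,r)$ to $L \cap \inte(S)$ is transintegral polyhedral, and likewise $r \mapsto F_m(M,r)$ for $i = m$. Because $L$ has an integer direction vector, it is the image under a monomial change of coordinates of a one-dimensional annulus, so restricting $M$ along $L$ produces a differential module over a one-dimensional base; this reduces the claim to the variation of subsidiary radii in one variable. There one invokes three standard facts from the one-variable theory: (i) each partial sum $F_i(M,\cdot)$ is \emph{convex} along $L$; (ii) it is piecewise affine with slopes in $\frac{1}{m!}\ZZ$, so that $m! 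F_i$ has integer slopes (for $i = m$ the total sum is governed by the determinant of $M$, a rank-one object, whose breaks are already integral, which is why the factor $m!$ may be dropped); and (iii) there are only finitely many affine pieces on $L \cap S$. Granting (i) and (ii), transintegral polyhedrality on $L$ follows exactly as in Lemma~\ref{L:integral derivs}, the required boundedness of the slopes being supplied by the boundedness of $L \cap S$.

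With the one-dimensional input in hand, I would apply Corollary~\ref{C:kedlaya-xiao2} to the function $g := m! F_i(M,\cdot)$ on $\inte(S)$, and separately to $F_m$: the hypothesis of that corollary is precisely that the restriction to every transrational line is transintegral polyhedral, which is what the previous paragraph establishes, so its conclusion is that $g$ (resp.\ $F_m$) is transintegral polyhedral on $S$. Equivalently, one may unwind the corollary and argue in two stages, first applying Theorem~\ref{T:master} to obtain local transintegral polyhedrality on $\inte(S)$, and then passing from local to global by Lemma~\ref{L:bound1} (using that $S$ lies in an affine orthant) or Theorem~\ref{T:skeleton}; the continuous extension to the boundary is legitimate because $S$ is a bounded polyhedron and each partial sum is a continuous convex function.

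The main obstacle is the one-dimensional analytic input of the second paragraph, namely the convexity of the partial sums $F_i$ together with the integrality of their slopes after scaling by $m!$ (and the integrality of $F_m$ itself). This is the genuinely $p$-adic part, resting on the theory of breaks and the behavior of the spectral norms $|D_j|_{\spect}$ under restriction to a line, and it is exactly the content carried over from \cite{kedlaya-xiao}. By contrast, the passage from one dimension to $n$ dimensions is purely combinatorial-geometric and is absorbed wholesale into Corollary~\ref{C:kedlaya-xiao2}; no further estimate on the module $M$ is needed once the line-by-line statement is known.
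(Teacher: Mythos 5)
Your proposal follows essentially the same route as the paper: reduce to transintegral polyhedrality along each transrational line via a monomial change of coordinates (deferring the intricate one-dimensional $p$-adic computation to \cite{kedlaya-xiao}), then apply the detection machinery of the earlier sections (the paper invokes Theorem~\ref{T:kedlaya-xiao}, you invoke the equivalent Corollary~\ref{C:kedlaya-xiao2}). The paper likewise treats the convexity and slope-integrality of the $F_i$ along a line as the imported analytic input rather than proving it, so your division of labor matches the intended argument.
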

It turns out to be difficult to analyze the $F_i(M,r)$ on all of $S$
directly. Instead, the proof goes by checking transintegral polyhedrality
on each transrational line, then invoking 
Theorem~\ref{T:kedlaya-xiao}. The point is that by a change of coordinates
(replacing $t_1,\dots,t_n$ by certain monomials) we can reduce the study
of any transrational line to the study of a line parallel to one of the
coordinate axes, say the first one. In that case, there is no harm in burying
$t_2,\dots,t_n$ in the base field (i.e., replacing $F$ by the completion
of the rational function field $F(t_2,\dots,t_n)$ for an appropriate 
Gauss norm) as long as we keep track of the derivations with respect to
$t_2,\dots,t_n$. Although the remaining calculation is still quite intricate,
it would have been immeasurably more so without the reduction to the
one-dimensional case.

\section{Other questions}

We conclude by mentioning some questions for which our ignorance
of the answer remains somewhat frustrating. This should illustrate the
extent to which the restriction to integral slopes is crucial for the
arguments of this paper.

\begin{question}
Let $S$ be an open convex subset of $\RR^n$.
Let $f: S \to \RR$ be a function whose restriction to each line parallel
to one of the coordinate axes is convex. Is $f$ necessarily convex?
\end{question}

\begin{question}
Let $S$ be an open convex subset of $\RR^n$.
Let $f: S \to \RR$ be a function whose restriction to each line parallel
to one of the coordinate axes is locally polyhedral. Is $f$ necessarily 
locally polyhedral?
\end{question}

\begin{question}
Let $f: \RR^2 \to \RR$ be a function whose restriction to each
line is polyhedral.
Is $f$ necessarily polyhedral?
\end{question}

\section*{Acknowledgments}
Thanks to David Speyer for helpful discussions.
The first author was supported by NSF CAREER grant DMS-0545904
and the NEC Research Support Fund. The second author was supported
by the Paul E. Gray Fund of MIT's Undergraduate Research Opportunities
Program (UROP).

\end{document}